\documentclass[11pt,twoside]{article}

\setlength{\textwidth}{160mm} \setlength{\textheight}{210mm}
\setlength{\parindent}{8mm} \frenchspacing
\setlength{\oddsidemargin}{0pt} \setlength{\evensidemargin}{0pt}
\thispagestyle{empty}
\usepackage{mathrsfs,amsfonts,amsmath,amssymb}
\usepackage{latexsym}
\usepackage{comment}

\usepackage[T1]{fontenc}
\usepackage[utf8]{inputenc}
\usepackage{authblk}

\pagestyle{myheadings}
\newtheorem{satz}{Theorem}
\newtheorem{proposition}[satz]{Proposition}
\newtheorem{theorem}[satz]{Theorem}
\newtheorem{lemma}[satz]{Lemma}

\newtheorem{corollary}[satz]{Corollary}

\usepackage{listings}
\lstset{
	literate={ą}{{\k a}}1
	{Ą}{{\k A}}1
	{ż}{{\. z}}1
	{Ż}{{\. Z}}1
	{ź}{{\' z}}1
	{Ź}{{\' Z}}1
	{ć}{{\' c}}1
	{Ć}{{\' C}}1
	{ę}{{\k e}}1
	{Ę}{{\k E}}1
	{ó}{{\' o}}1
	{Ó}{{\' O}}1
	{ń}{{\' n}}1
	{Ń}{{\' N}}1
	{ś}{{\' s}}1
	{Ś}{{\' S}}1
	{ł}{{\l}}1
	{Ł}{{\L}}1
}

\def\T{\mathsf{T}}

\def\Z{\mathbb {Z}}
\def\F{\mathbb {F}}
\def\E{\mathsf{E}}

\def\l{\lambda}

\def\a{\alpha}

\def\d{\delta}
\def\o{\omega}
\def\({\big (}
\def\){\big )}

\def\le{\leqslant}
\def\ge{\geqslant}
\def\_phi{\varphi}
\def\eps{\varepsilon}

\def\Gr{{\mathbf G}}
\def\FF{\widehat}
\def\k{\kappa}
\def\ov{\overline}

\def\D{\Delta}

\title{On some applications of GCD sums to Arithmetic Combinatorics}
\date{}
\author[]{I.~D.~Shkredov}


\begin{document}
\maketitle
\begin{abstract}
	Using GCD sums, we show that the set of the primes has  small common multiplicative energy with an arbitrary exponentially big integer set  $S$ 
	 and, in particular,  size  of any arithmetic progression in $S$ having  the beginning at zero, is 
	at most $O(\log |S| \cdot \log \log |S|)$. 
	This 
	result 
	can be considered as an integer analogue of Vinogradov's question about the least quadratic non--residue.
	The proof rests on a certain repulsion property of the function $f(x)=\log x$.  
	Also, we consider the case of general $k$--convex functions $f$  and obtain  a new incidence result for collections of the  curves $y=f(x)+c$. 
\end{abstract}

\section{Introduction}

Having a ring $R$ with two operations $+$ and $\cdot$ one can define the {\it sumset} of sets $A,B \subseteq R$ as 
\[
	A+B = \{ a+b ~:~ a\in A,\, b\in B \}
\]
and, similarly,  the {\it product} set
\[
	AB = \{ a\cdot b ~:~ a\in A,\, b\in B \} \,.
\]
The {\it sum--product phenomenon} (see, e.g., \cite{TV}) predicts that additive and multiplicative structure cannot coexist up to some natural algebraic constrains. 
This can be expressed in many different ways see, e.g., \cite{Bourgain_more}
and in our paper we consider just  one of them. 
Let us formulate a particular case of the main result, which is contained in Theorem \ref{t:AP_in_G}
from Section \ref{sec:f=log}.

\begin{theorem}
		Let $S \subset \Z$ be a finite set, $l$ be an integer number, and let $\mathcal{P}^{(l)}$ be the set of primes in the segment $\{1,\dots, l\}$. 
Then 
the condition 
\begin{equation}\label{cond:S_A_new_intr}
	\log |S| =o \left( \frac{l}{\log l} \right)   
\end{equation}
implies 
\begin{equation}\label{f:S_A_new_K_intr}
	|\{ (p,p',s,s') \in \mathcal{P}^{(l)} \times \mathcal{P}^{(l)} \times S \times S ~:~ ps = p's' \}| = o( |\mathcal{P}^{(l)}|^2 |S|) \,.
\end{equation}
In particular, if $|SS| \ll |S|$, then size of any arithmetic progression with the beginning at zero in $S$ does not exceed 
	\begin{equation}\label{f:S_A_new_intr}
	|A| \ll \log |S| \cdot \log \log |S| \,.
	\end{equation}
\label{t:AP_in_G_intr}
\end{theorem}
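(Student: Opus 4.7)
The plan is to reduce the multiplicative energy to a sum over $S$ that can be controlled, and then to exploit the Sidon property of $\log\mathcal{P}^{(l)}$ in $\mathbb{R}$ --- the logarithms of distinct primes being $\mathbb{Q}$-linearly independent is the ``repulsion of $\log$'' promised by the abstract. Denote the quantity on the left of \eqref{f:S_A_new_K_intr} by $E$. Separate the diagonal $p=p'$, which contributes exactly $|\mathcal{P}^{(l)}|\cdot|S|$. For $p\ne p'$, the coprimality of two distinct primes forces $p\mid s'$ and $p'\mid s$, so $(s,s')=(p't,pt)$ for a unique positive integer $t$. Writing $\omega_{\le l}(s)$ for the number of primes $\le l$ dividing $s$ and setting $f(t):=|\{p\in\mathcal{P}^{(l)}:pt\in S\}|$, I obtain
\[
E \;=\; |\mathcal{P}^{(l)}|\cdot|S| \;+\; \sum_{t\ge 1} f(t)\bigl(f(t)-1\bigr).
\]
The identity $\sum_t f(t)=\sum_{s\in S}\omega_{\le l}(s)\le|\mathcal{P}^{(l)}|\cdot|S|$ makes the linear part $O(|\mathcal{P}^{(l)}|\cdot|S|)=o(|\mathcal{P}^{(l)}|^{2}|S|)$, so the matter reduces to the bound $\sum_t f(t)^{2}=o(|\mathcal{P}^{(l)}|^{2}|S|)$.

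The estimate I would aim for is
\[
\sum_{t\ge 1} f(t)^{2} \;\ll\; |\mathcal{P}^{(l)}|\cdot|S|\cdot\log|S|,
\]
which combined with $|\mathcal{P}^{(l)}|\asymp l/\log l$ and the hypothesis $\log|S|=o(l/\log l)$ closes the argument. To prove it I would expand $\sum f^{2}=\sum_{p,p'\in\mathcal{P}^{(l)}}|\{t:pt,p't\in S\}|$ and, for $p\ne p'$, identify the summand with the count of pairs $(s_{1},s_{2})\in S^{2}$ whose reduction $(s_{1}/\gcd(s_{1},s_{2}),\,s_{2}/\gcd(s_{1},s_{2}))$ is an ordered pair of distinct primes in $\mathcal{P}^{(l)}$. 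The Sidon property of $\log\mathcal{P}^{(l)}$ ensures that each such pair $(s_{1},s_{2})$ is counted by at most one prime pair, and the resulting double sum can be recast as a Gál-type GCD sum on the divisors of the elements of $S$, estimated via the elementary bound $\omega_{\le l}(s)\le\log|s|/\log 2$ and a layered decomposition of $S$ according to the number of small prime factors of its elements. The main obstacle is precisely this step: the trivial $\sum f^{2}\le|\mathcal{P}^{(l)}|\sum f$ only reproduces the trivial $E\le|\mathcal{P}^{(l)}|^{2}|S|$, so a genuine gain of $\log|S|/|\mathcal{P}^{(l)}|$ must be extracted from the arithmetic structure. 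The extremal configuration $S=\{d:d\mid\prod_{p\le l}p\}$, for which $\log|S|\asymp l/\log l$ saturates the hypothesis and $\sum f^{2}\asymp|\mathcal{P}^{(l)}|^{2}|S|$, shows the target is essentially tight at the boundary.

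For the AP statement, suppose $A=\{0,d,2d,\ldots,(N-1)d\}\subseteq S$; then $d\cdot\mathcal{P}^{(N-1)}\subseteq A\setminus\{0\}\subseteq S$. Since the multiplicative energy is invariant under scaling of either argument, Cauchy--Schwarz gives
\[
|\{(p,p',s,s')\in(\mathcal{P}^{(N-1)})^{2}\times S^{2}:ps=p's'\}| \;\ge\; \frac{|\mathcal{P}^{(N-1)}|^{2}|S|^{2}}{|\,d\cdot\mathcal{P}^{(N-1)}\cdot S\,|}\;\ge\;\frac{|\mathcal{P}^{(N-1)}|^{2}|S|^{2}}{|SS|}\;\gg\;|\mathcal{P}^{(N-1)}|^{2}|S|
\]
using $|SS|\ll|S|$. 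Comparing with the first part of the theorem applied at $l=N-1$ forces $\log|S|\ne o(N/\log N)$, hence $\log|S|\gg N/\log N$; solving this inequality for $N$ (so that $\log N\asymp\log\log|S|$) yields $N\ll\log|S|\cdot\log\log|S|$, as required.
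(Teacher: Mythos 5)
Your combinatorial reduction is fine: separating the diagonal and using unique factorisation to write the off-diagonal count as $\sum_t f(t)\bigl(f(t)-1\bigr)$ with $f(t)=|\{p\in\mathcal{P}^{(l)}:pt\in S\}|$ is correct, and your deduction of the progression bound (scale invariance of the multiplicative energy, Cauchy--Schwarz against $|SS|\ll|S|$, then solving $N/\log N\ll\log|S|$) is exactly the route the paper takes. But the heart of the theorem is the estimate you only ``aim for'', namely $\sum_t f(t)^2\ll|\mathcal{P}^{(l)}||S|\log|S|$, equivalently $\E^{\times}(\mathcal{P}^{(l)},S)\ll|\mathcal{P}^{(l)}||S|\log|S|$, and this is not proved; you yourself flag it as ``the main obstacle''. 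The sketched route cannot supply the missing gain. The Sidon-type property of the prime logarithms (uniqueness of the representation $s_1/s_2=p/p'$ with $p\neq p'$) is already fully consumed in the reduction to $\sum_t f(t)^2$ and yields nothing beyond the trivial bound $|\mathcal{P}^{(l)}|^2|S|$, as you note; and the bound $\omega_{\le l}(s)\le\log|s|/\log 2$ is in terms of the sizes of the elements of $S$, which are in no way controlled by $|S|$ or $l$, so no layered decomposition based on it can produce a bound featuring $\log|S|$. In effect the ``target estimate'' is a sharp quantitative form of the theorem itself, so aiming for it without proof assumes the conclusion.

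What is missing is precisely the GCD-sum input that the paper supplies analytically: one introduces the random Euler product $\mathcal{Z}_X(\alpha)=\prod_{z\le p<2z}\bigl(1+X_p p^{-\alpha}\bigr)$, computes its $2l$-th moments (Lemma \ref{l:moments_new}), applies H\"older with an optimised number of moments and a suitable exponent $\alpha$ to bound $\E^{\times}(\mathcal{P}_z,S)$ in terms of $\|1_S\|_2$ and $\T^{\times}_{s+1}(1_S)$ (Proposition \ref{p:E(P_z,w)}), and then sums over dyadic ranges $z=2^j\le l/2$ with an optimised $\eps_*$ (Theorem \ref{t:AP_in_G}); your Part 1 needs an argument of this type, or some substitute for it, to be a proof. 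Two minor further points: as literally stated your target inequality fails for $|S|=1$ (the left-hand side can be as large as $|\mathcal{P}^{(l)}|$ while the right-hand side vanishes), so it should read $\log|S|+1$ or similar; and if $0\in S$ the pair $s=s'=0$ contributes $|\mathcal{P}^{(l)}|^2$ extra quadruples, harmless but worth excluding explicitly.
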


The result above can be considered as an integer  analogue of Vinogradov's question about the least quadratic non--residue. 
Namely,  having a  number $p$ one can take  the subgroup of squares $\mathcal{R} \subseteq \Z/p\Z$ with the product set $\mathcal{R}\mathcal{R}$ equals $\mathcal{R}$ 
and ask the question about the maximal length of the arithmetic progression with the beginning at zero, belonging to $\mathcal{R}$. 
Size of this arithmetic progression is usually  denoted  as $n_p$ and it is known \cite{Paley2} that there are infinitely many primes such that 
$$
	n_p \gg \log p \cdot \log \log p \,.
$$ 
On the other hand, GRH implies \cite{Ankeny} that $n_p = O(\log^2 p)$ (the best  unconditional bound belongs to Burgess \cite{Burgess} who proved 
$n_p \ll p^{\frac{1}{4\sqrt{e}}+o(1)}$). 
Thus in the integer case our Theorem \ref{t:AP_in_G_intr} gives upper bound \eqref{f:S_A_new_intr} of a comparable quality.

\bigskip

Our another  result is Theorem \ref{t:AS} from Section \ref{sec:f=log}.

\begin{theorem}
	Let $A,S \subset \Z$ be finite sets and $0\le \a <1/6$ be any number.
	Suppose that $|A+A| \le K|A|$ with 
	\begin{equation}\label{cond:AS_1_intr}
	K \ll \exp(\log^{\a} |A|) 
	\end{equation}
	and 
	\begin{equation}\label{cond:AS_2_intr}
	|S| \le \exp\left(\frac{\log^{2-6\a}|A|}{\log \log |A|} \right) \,.
	\end{equation}
	Then for an absolute constant $C>0$ and a certain $a\in A$ one has 
	\begin{equation}\label{f:AS_intr}
	|(A-a)S| \gg |S| \cdot \exp(C \log^{1-3\a} |A|) \,.
	\end{equation}
	In particular, 
	$
	|(A-A)S| \gg |S| \cdot \exp(C \log^{1-3\a} |A|) \,.
	$
	\label{t:AS_intr}
\end{theorem}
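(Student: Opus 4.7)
\medskip

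\noindent\textbf{Proof plan.}
The plan is a contradiction argument that plugs into the incidence bound for curves $y=f(x)+c$ (with $f=\log$) promised for Section \ref{sec:f=log}. Suppose, towards a contradiction, that $|(A-a)S|\le |S|M$ for every $a\in A$, with $M=\exp(C\log^{1-3\a}|A|)$ for a suitable constant $C$.

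The first step is to harvest many multiplicative coincidences. By Cauchy--Schwarz,
\[
	\E^{\times}(A-a,\,S) \;\ge\; \frac{|A-a|^{2}|S|^{2}}{|(A-a)S|} \;\ge\; \frac{|A|^{2}|S|}{M}
\]
for each $a\in A$, so summing over $a$ yields at least $|A|^{3}|S|/M$ solutions $(a,a_1,a_2,s_1,s_2)\in A^{3}\times S^{2}$ to $(a_1-a)s_1=(a_2-a)s_2$. Taking logarithms, each solution corresponds to an incidence between the point set $A$ and the family of curves $\Gamma_{a,c}\colon y=\log(x-a)+c$, parametrised by $a\in A$ and $c\in \log S-\log S$. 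The hypothesis $|A+A|\le K|A|$ enters through Plünnecke--Ruzsa, which gives $|kA-kA|\le K^{2k}|A|$; thanks to \eqref{cond:AS_1_intr} the factor $K^{O(k)}=\exp(O(k\log^{\a}|A|))$ will remain manageable.

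The second step is to apply the $k$-convex incidence theorem of Section \ref{sec:f=log}. Since $\log$ is $k$-convex for every $k$, that result supplies a non-trivial upper bound on the number of incidences of $A$ with the curves $\Gamma_{a,c}$, whose saving over the trivial count grows with $k$. Confronting this upper bound with the lower bound $|A|^{3}|S|/M$ and optimising $k$ delivers $M\gg \exp(C\log^{1-3\a}|A|)$, contradicting the starting assumption and forcing \eqref{f:AS_intr} for some $a\in A$; the statement about $(A-A)S$ follows since $(A-a)\subseteq A-A$.

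The main obstacle is the optimisation in $k$: increasing $k$ strengthens the convexity saving, but costs a Plünnecke factor $K^{O(k)}=\exp(O(k\log^{\a}|A|))$ together with a further $k$-dependent loss involving $|S|$ that enters through the higher derivatives of $\log$ in the incidence bound. The exponent $1-3\a$ in \eqref{f:AS_intr} and the admissible range \eqref{cond:AS_2_intr} for $|S|$ emerge from balancing these competing terms, and the threshold $\a<1/6$ is precisely the regime in which the convexity saving still beats the Plünnecke loss; beyond this threshold the trade-off degenerates and the argument breaks down.
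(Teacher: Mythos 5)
There is a genuine gap: the route you choose (reduce to incidences with the shifted logarithmic curves and invoke the $k$--convexity machinery, i.e.\ Theorem \ref{t:k-convex_stat_intr} / Theorem \ref{t:inc_f}) is quantitatively too weak to prove the theorem in the stated range of $|S|$. The savings that the convexity results provide degrade doubly exponentially in the relevant parameter: Theorem \ref{t:k-convex_stat_intr} gains $|A|^{k+1}$ but loses $(CK)^{2^{k+1}}$, and in the incidence form (Theorem \ref{t:inc_f}) the exponent saving is only $\delta(\eps)\sim\exp(-\exp(O(1/\eps)))$ with $\eps\approx \log|A|/\log|S|$, and the hypothesis $\epsilon\le\exp(-1/(c\eps))$ on the doubling forces $\log|S|\ll \log|A|\cdot\log\log|A|$. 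This is exactly the point made in the Introduction: under \eqref{cond:AS_1_intr} the convexity route requires $|S|\le\exp(O(\log|A|\cdot\log\log|A|))$, whereas \eqref{cond:AS_2_intr} permits $|S|$ up to $\exp(\log^{2-6\a}|A|/\log\log|A|)$, which is far larger for $\a<1/6$ (note $2-6\a>1$). So no optimisation in $k$ can close this: for $S$ near the top of the admissible range your trade-off between the convexity gain and the Pl\"unnecke/Freiman losses degenerates long before it yields \eqref{f:AS_intr}, and your heuristic explanation of where the exponent $1-3\a$ and the threshold $\a<1/6$ come from does not match what actually produces them.

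The paper's proof uses two ingredients absent from your plan. First, Sanders' structure theorem (Theorem \ref{t:Sanders_3log}) with exponent $\kappa$ slightly above $3$: from $K\ll\exp(\log^\a|A|)$ one extracts an arithmetic progression $P$ and a shift $y$ with $|A\cap(P+y)|\ge |P|\exp(-O(\log^\kappa K))$, and it is the quantity $\log|A|/\log^{\kappa}K\approx\log^{1-3\a}|A|$ that produces both the growth exponent in \eqref{f:AS_intr} and the constraint $\a<1/6$. Second, the GCD--sum estimate (Lemma \ref{l:Radziwill}, proved via moments of the random zeta function), which bounds the multiplicative energy of an interval/progression against an arbitrary weight by $N\|w\|_2^2\exp\bigl(C\sqrt{\log\log N\cdot\log(\|w\|_1\|w\|_2^{-1})}\bigr)$ as in \eqref{f:Radziwill}; the loss $\sqrt{\log\log|A|\cdot\log|S|}$ is what tolerates sets $S$ as large as \eqref{cond:AS_2_intr}. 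One then takes $a\in A\cap(P+y)$, sets $D_*=A\cap(P+y)-a\subseteq(A-A)\cap(P-P)$, and concludes by Cauchy--Schwarz, $|D_*S|\ge |D_*|^2|S|^2/\E^{\times}(P-P,S)$. Your first step (Cauchy--Schwarz producing many solutions of $(a_1-a)s_1=(a_2-a)s_2$) is fine as far as it goes, but to salvage the argument you would have to replace the convexity input by an energy bound of GCD--sum strength for $\E^{\times}(A-a,S)$, which is precisely what the paper's structural reduction to a progression plus Lemma \ref{l:Radziwill} supplies.
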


The result above 
can be considered as the first step towards the main conjecture from  \cite{BR-NZ} where authors do not assume that the additional condition \eqref{cond:AS_1_intr} takes place (also, see papers \cite{HR-NRI}, \cite{HR-NRII} in this direction).

The method of the proofs of Theorems \ref{t:AP_in_G_intr}, \ref{t:AS_intr} uses so--called GCD sums (see, e.g., \cite{Aisleitner}, \cite{BS}, \cite{Bloom}, \cite{Radziwll}), which are connected with a series of questions of the Uniform Distribution,  as well as Number Theory  in particular, with large values of the zeta function. 
In our paper we follow beautiful exposition of random zeta functions approach from \cite{Radziwll}.  
Thus our method extensively uses the integer arithmetic. 
It is interesting to obtain some analogues of Theorems \ref{t:AP_in_G_intr}, \ref{t:AS_intr} for subsets of $\mathbb{R}$ or $\mathbb{C}$.

If one takes the function  $f(x) = \log x$, then 
Theorem \ref{t:AS_intr} 
can be considered as a repulsion result concerning the logarithmic function.
Namely, estimate \eqref{f:AS_intr} says that $|f(A-a) + \log S|$ must be significantly larger than $|S|$ for rather big sets $S$ as in \eqref{cond:AS_2_intr}.
The first results in the direction were obtained in \cite{HR-NR} for 
general $k$--convex functions (that is having strictly monotone the first $k$ derivatives).
Recall \cite[Theorem 1.4]{HR-NR}.

\begin{theorem}
	Let $A$ be a finite set of real numbers contained in an interval $I$ and let $f$ be a function which is $k$--convex on $I$ for some $k \geq 1$. 
	Suppose that $|A|>10 k$. 
	Then if $|A+A-A| \leq K|A|,$  then we have
	$$
	\left|2^{k} f(A)-(2^{k}-1) f(A)\right| \geq \frac{|A|^{k+1}}{(C K)^{2^{k+1}-k-2}(\log |A|)^{2^{k+2}-k-4}}
	$$
	for some absolute constant $C>0$.	
	\label{t:k-convex_stat_intr}
\end{theorem}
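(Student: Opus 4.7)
I would prove this by induction on $k$, lifting one degree of convexity per step.

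\textbf{Base case $k=1$.} Here $f$ is strictly convex on $I$ and the target inequality reads $|2f(A)-f(A)| \gg |A|^2/(CK(\log |A|)^{3})$. From the hypothesis $|A+A-A|\le K|A|$ and Pl\"unnecke--Ruzsa, every short iterated sumset $|nA-mA|$ is bounded by $K^{O(n+m)}|A|$. The convex--function sumset machinery in the spirit of Elekes--Nathanson--Ruzsa, sharpened by Iosevich--Konyagin--Rudnev--Ten, Schoen--Shkredov and the authors' own earlier work, then yields the required lower bound on $|2f(A)-f(A)|$, the cubic logarithm being precisely the current state of the art.

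\textbf{Induction step $k-1\Rightarrow k$.} Given a $k$--convex $f$, pass to a finite difference: for each shift $h\in\R$ set $g_h(x):=f(x+h)-f(x)$, which satisfies $g_h^{(j)}(x)=f^{(j)}(x+h)-f^{(j)}(x)$, so by strict monotonicity of $f',\ldots, f^{(k)}$ the function $g_h$ is $(k-1)$--convex on the overlap of the domains. Choose $h$ by a dyadic pigeonhole in $A-A$ so that a subset $A'\subseteq A\cap (A-h)$ of size $|A'|\ge |A|/(CK\log |A|)^{O(1)}$ exists; by Pl\"unnecke--Ruzsa $A'$ still has doubling at most a fixed power of $K$. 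Apply the induction hypothesis to $g_h$ on $A'$:
\[
|2^{k-1}g_h(A')-(2^{k-1}-1)g_h(A')| \;\ge\; \frac{|A'|^{k}}{(CK)^{2^{k}-k-1}(\log |A'|)^{2^{k+1}-k-3}}\,.
\]
Since $g_h(A')\subseteq f(A+h)-f(A)\subseteq f(A)-f(A)$, the left--hand iterated sumset embeds into a bounded linear combination of copies of $f(A)$ with signed integer coefficients whose positive and negative parts each have absolute value $2^{k}$ and $2^{k}-1$, respectively; a final Pl\"unnecke--Ruzsa transfer converts this into a lower bound on $|2^{k}f(A)-(2^{k}-1)f(A)|$. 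The exponents of $K$ and $\log|A|$ then follow from the recursions $e_k=2e_{k-1}+k$, $f_k=2f_{k-1}+k+3$, which check against $2^{k+1}-k-2$ and $2^{k+2}-k-4$.

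\textbf{Main obstacle.} The delicate point is the bookkeeping of the doubling hypothesis under the induction: the subset $A'$ does not a priori inherit the small-doubling of $A$, and $g_h(A')$ is not of the shape $\tilde f(\tilde A)$ for a $(k-1)$--convex $\tilde f$ on a genuine set $\tilde A$ enjoying our hypothesis, so each step costs another Pl\"unnecke power of $K$ together with a logarithmic factor from the pigeonhole in the scale $h$. Controlling these losses uniformly and simultaneously verifying that $g_h$ is honestly $(k-1)$--convex on a workable interval, rather than just on a thin set of scales, is what one has to do carefully; modulo that, the recursion runs smoothly and produces exactly the stated exponents.
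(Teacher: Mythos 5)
First, a point of reference: the paper you are reading does not prove this statement at all --- it is quoted verbatim as \cite[Theorem 1.4]{HR-NR}, so the comparison has to be with the Hanson--Roche-Newton--Rudnev argument, which runs by induction on the convexity degree via consecutive differences together with a ``squeezing'' step: for each of the $\sim |A|$ (more precisely $\gg |A|/K$, after pigeonholing made possible by $|A+A-A|\le K|A|$) consecutive gaps of $A$, convexity forces the translated copies of the $(k-1)$--level iterated sumsets to land in pairwise disjoint ranges, and it is exactly this disjointness, summed over the gaps, that multiplies the lower bound by a factor of order $|A|$ at every step and produces $|A|^{k+1}$ at the end.

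Your sketch is missing precisely that mechanism, and this is a genuine gap rather than a bookkeeping issue. You apply the inductive hypothesis to a single difference function $g_h$ on a single popular subset $A'$, which can only give a lower bound of order $|A'|^{k}\approx|A|^{k}$ for a set that, as you note, sits inside an iterated sumset of $f(A)$; nothing in the argument then gains the extra factor $|A|$ needed for the exponent $k+1$, because you never exploit convexity to make contributions from different shifts disjoint. Worse, the final ``Pl\"unnecke--Ruzsa transfer'' goes the wrong way: your embedding actually lands in $(2^{k}-1)f(A)-(2^{k}-1)f(A)$ (each summand $f(a+h)-f(a)$ contributes one plus and one minus, so both the positive and negative parts have size $2^{k}-1$, not $2^{k}$ and $2^{k}-1$), and passing from a lower bound there to a lower bound on $|2^{k}f(A)-(2^{k}-1)f(A)|$ cannot use Pl\"unnecke--Ruzsa, since that would require small doubling of $f(A)$ --- the very thing the theorem refutes; the best generic tool is the Ruzsa triangle inequality, which gives only $|2^{k}f(A)-(2^{k}-1)f(A)|\ge \bigl(|f(A)|\cdot|(2^{k}-1)f(A)-(2^{k}-1)f(A)|\bigr)^{1/2}$, i.e.\ a square-root loss that destroys the recursion. (Symptomatically, the recursions you assert are not derived and do not even check: $f_k=2f_{k-1}+k+3$ gives $f_2=11$, whereas $2^{k+2}-k-4$ gives $10$; one would need $f_k=2f_{k-1}+k+2$.) The base case is likewise a citation to ``machinery'' rather than an argument, but the decisive defect is the absent squeezing/disjointness step in the induction: without it the approach cannot reach $|A|^{k+1}$.
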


In this direction we obtain a result on common energy of an arbitrary set $S$ and the image of a $k$--convex function (the required definitions can be found in Section \ref{sec:def}). 
Of course general Theorem \ref{t:E_f_intr}  below gives weaker bounds than Theorem \ref{t:AP_in_G_intr} in the particular case $f(x)=\log x$.

\begin{theorem}
	Let $f$ be a function which is $k$--convex on a set $I$ for some $k \geq 1$.
	Suppose that $|I+I-I|\le |I|^{1+\epsilon}$.
	Then for any finite set $S\subset \mathbb{R}$ with  $|I| \ge |S|^\eps$, $\eps \gg 1/k$, $\epsilon \le \exp(-1/(c\eps))$  there is $\delta (\eps)>0$ such that 
	\begin{equation}\label{f:E_f_intr}
	\E^{+} (f(I),S) \ll |I|^2 |S|^{1-\delta(\eps)} \,.
	\end{equation}
	In particular, $|f(I) + S| \gg |S|^{1+\delta(\eps)}$. 
	\label{t:E_f_intr}
\end{theorem}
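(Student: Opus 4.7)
The plan is to argue by contradiction, transferring an assumed largeness of $\E^+(f(I),S)$ into a small asymmetric doubling of the pair $(S,f(I))$, amplifying via Plünnecke--Ruzsa into a small iterated sumset of $f(I)$, and finally contradicting Theorem~\ref{t:k-convex_stat_intr}.

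First I would assume the opposite of the desired bound: $\E^+(f(I),S) \ge C\,|I|^2|S|^{1-\delta(\eps)}$ for a large constant $C$, where $\delta(\eps)>0$ is to be chosen at the end. Cauchy--Schwarz applied to $(f(I),S)$ then yields
\[
|f(I)+S| \le \frac{|f(I)|^2|S|^2}{\E^+(f(I),S)} \le C^{-1}\,|S|^{1+\delta(\eps)},
\]
so the asymmetric doubling $K := |f(I)+S|/|S|$ is bounded by $|S|^{\delta(\eps)}$. Applying the Plünnecke--Ruzsa--Petridis inequality with base set $S$ and iterated set $f(I)$, I get $|m\,f(I)-n\,f(I)| \le K^{m+n}|S|$ for every $m,n\ge 0$. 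Specialising to $m=2^k,\,n=2^k-1$:
\[
|2^k f(I) - (2^k-1)f(I)| \le |S|^{1+\delta(\eps)(2^{k+1}-1)}.
\]

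Next I would invoke Theorem~\ref{t:k-convex_stat_intr} with $A:=I$ and $K_0=|I|^{\epsilon}$ (coming from $|I+I-I|\le |I|^{1+\epsilon}$) to produce the matching lower bound
\[
|2^k f(I) - (2^k-1)f(I)| \gg \frac{|I|^{k+1}}{|I|^{\epsilon(2^{k+1}-k-2)}(\log|I|)^{2^{k+2}-k-4}}.
\]
Comparing the two inequalities, taking logarithms, and substituting $\log|I|\ge \eps\log|S|$ (the polylogarithmic and constant factors being absorbed for $|S|$ large), one obtains a clean inequality of the shape
\[
\eps(k+1) - \eps\epsilon\cdot 2^{k+1} - o(1) \le 1 + \delta(\eps)\cdot 2^{k+1}.
\]
The two side conditions in the theorem are then tailor-made for this estimate: $\epsilon\le\exp(-1/(c\eps))$ makes $\epsilon\cdot 2^{k+1}$ negligible (for $k$ up to the order of $1/(c\eps\log 2)$), while $\eps\gg 1/k$ guarantees $\eps(k+1)-1 \gg 1$. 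Setting $\delta(\eps) := \tfrac{1}{2}(\eps(k+1)-1)/2^{k+1}$ is then positive, violates the displayed inequality, and so yields the contradiction. The ``in particular'' statement $|f(I)+S|\gg |S|^{1+\delta(\eps)}$ follows immediately from the energy bound by one more application of Cauchy--Schwarz.

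The hard part is the parameter juggling in the last step. The Plünnecke loss $K^{m+n}$ is superexponential in $k$, whereas Theorem~\ref{t:k-convex_stat_intr} provides only the polynomial gain $|I|^{k+1}$ at the cost $|I|^{\epsilon\cdot 2^{k+1}}$. The superexponential smallness of $\epsilon$ in $1/\eps$ is precisely engineered to keep these two costs commensurate, and confirming that the optimal choice of $k$ (essentially $k\sim 1/\eps$) indeed produces a $\delta(\eps)>0$ depending only on $\eps$ — rather than on $|I|$ or $|S|$ — together with the careful propagation of the $(\log|I|)^{2^{k+2}-k-4}$ factor, is where the technical effort is concentrated.
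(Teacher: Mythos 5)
Your proposal breaks at its very first and most important step. From the assumed lower bound $\E^{+}(f(I),S)\ge C|I|^2|S|^{1-\delta}$ you claim, ``by Cauchy--Schwarz,'' that $|f(I)+S|\le |f(I)|^2|S|^2/\E^{+}(f(I),S)$. Cauchy--Schwarz gives exactly the reverse inequality, $|f(I)+S|\ge |f(I)|^2|S|^2/\E^{+}(f(I),S)$: large common energy never forces the \emph{whole} sumset to be small. Indeed, if half of $f(I)$ accounts for essentially all of the energy with $S$ while the other half is an unstructured set, the energy stays within a constant factor of maximal yet $|f(I)+S|$ can be as large as $\gg |I||S|$. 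Consequently you cannot set $K=|f(I)+S|/|S|\le |S|^{\delta}$, and the whole downstream chain (Pl\"unnecke amplification of $K$, comparison with the lower bound of Theorem \ref{t:k-convex_stat_intr}, the parameter juggling with $\eps,\epsilon,k$) has no valid input. The only correct way to turn near-maximal energy into small doubling is the Balog--Szemer\'edi--Gowers theorem, and it only produces \emph{subsets} $A_*\subseteq f(I)$ (with polynomial losses); one must then write $A_*=f(S')$ with $S'\subseteq I$, control $|S'+S'-S'|/|S'|$ in terms of $|I|^{1+\epsilon}/|A_*|$, and apply Theorem \ref{t:k-convex_stat_intr} to that subset — this is precisely the bookkeeping the paper carries out inside the proof of Theorem \ref{t:T_l}.

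A second, related gap: even a repaired BSG version of your contradiction argument would only control a structured subset of the configuration, whereas the statement is an energy bound for all of $S$ against all of $f(I)$. The paper bridges this by first proving the moment bound $\T^{+}_{2^l}(f(I))\ll |I|^{2^{l+1}-c\log l}$ (Theorem \ref{t:T_l}, where your Pl\"unnecke-versus-Theorem-\ref{t:k-convex_stat_intr} tension genuinely appears, but applied inductively to popular level sets after BSG), then converting it into the incidence estimate of Theorem \ref{t:inc_f} by iterated Cauchy--Schwarz, and finally deducing the energy bound by the dyadic level-set summation in Corollary \ref{c:E_f}. Your scheme as written contains none of these transfer steps. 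Only the final remark is sound: the ``in particular'' claim $|f(I)+S|\gg |S|^{1+\delta(\eps)}$ does follow from the energy bound by the (correctly oriented) Cauchy--Schwarz inequality.
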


Using the Pl\"unnecke inequality (see estimate  \eqref{f:Plunnecke} below) one can show that to have growth as in \eqref{f:AS_intr} under the assumptions as in \eqref{cond:AS_1_intr} 
Theorem \ref{t:k-convex_stat_intr} requires the condition 
\begin{equation}\label{f:S_A_old}
	|S| \le \exp(O (\log |A| \cdot \log \log |A|))  
\end{equation}
and our restriction \eqref{cond:AS_2_intr} is wider. 
Theorem \ref{t:AP_in_G}, as well as  Proposition  \ref{p:E(P_z,w)} below require much weaker restrictions on $|S|$ but provide a smaller growth.

Finally, recall the main result from \cite{sh_as}, which can be considered as  a quantitative version of some results from \cite{Bourgain_more}.

\begin{theorem}
	Let $p$ be a primes number, $A, B, C \subseteq \mathbb{F}_{p}$ be arbitrary sets, and $k \geq 1$ be such that $|A||B|^{1+\frac{(k+1)}{2(k+4)} 2^{-k}} \leq p$ and
$$
|B|^{\frac{k}{8}+\frac{1}{2(k+4)}} \geq|A| \cdot C_{*}^{(k+4) / 4} \log ^{k}(|A||B|) \,, 
$$
where $C_{*}>0$ is an absolute constant. Then
$$
\max \{|A B|,|A+C|\} \geq 2^{-3}|A| \cdot \min \left\{|C|,|B|^{\frac{1}{2(k+4)} 2^{-k}}\right\} \,, 
$$
and for any $\alpha \neq 0$
$$
\max \{|A B|,|(A+\alpha) C|\} \geq 2^{-3}|A| \cdot \min \left\{|C|,|B|^{\frac{1}{2(k+4)} 2^{-k}}\right\} \,.
$$
\label{t:sh_as}
\end{theorem}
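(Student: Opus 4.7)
The plan is to argue by contradiction: suppose both $|AB| < \mu |A|$ and $|A+C| < \mu |A|$ (respectively $|(A+\alpha)C| < \mu |A|$), where $\mu = 2^{-3}\min\{|C|, |B|^{2^{-k}/(2(k+4))}\}$. By Cauchy--Schwarz, these small-doubling hypotheses translate into the energy lower bounds $\E^{\times}(A,B) \gg |A|^{2}|B|^{2}/\mu$ and $\E^{+}(A,C) \gg |A|^{2}|C|^{2}/\mu$. The goal would then be to combine them and derive a contradiction by producing an upper bound on one of the two energies incompatible with the lower bound on the other.

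The central tool I would use is a Rudnev-type points--planes incidence estimate in $\mathbb{F}_{p}$, applied to a Cartesian configuration built from $A$ and iterated product sets of $B$. The hypothesis $|A||B|^{1+(k+1)2^{-k}/(2(k+4))} \le p$ is exactly what keeps the incidence bound below the $p$-threshold at which the $\mathbb{F}_{p}$ estimate becomes trivial, so it is no accident that this quantity appears in the statement. Concretely, I would set up a $k$-step iteration in which each stage combines (i) Balog--Szemer\'edi--Gowers on the multiplicative side to extract a refined $B'\subseteq B$ of bounded multiplicative doubling, (ii) the Pl\"unnecke--Ruzsa inequality to propagate this control to product sets $B^{(n)}$, and (iii) the incidence bound to convert the propagated multiplicative structure into a lower bound on $|AB|$ or $|A+C|$. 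Each iteration roughly halves the relevant exponent of $|B|$, which accounts for the $2^{-k}$ in the final bound.

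The main obstacle will be the careful bookkeeping of exponents and logarithmic losses across the $k$ iterations: one must verify that the incidence hypothesis remains valid at every stage as the effective point set grows, and that the compounded Pl\"unnecke constants enter only to the power $(k+4)/4$ rather than exponentially in $k$. The condition $|B|^{k/8+1/(2(k+4))} \ge |A|\cdot C_{*}^{(k+4)/4}\log^{k}(|A||B|)$ is tailored precisely to absorb these cumulative losses. The affine-shift variant, with $|(A+\alpha)C|$ in place of $|A+C|$, should be handled uniformly with the basic case: a generic additive shift does not disturb the Cartesian/incidence configuration in a material way, and in fact destroys any residual multiplicative structure on $A$, which helps rather than hinders the sum--product step.
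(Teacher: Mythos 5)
There is a genuine gap here, and in fact more than one. First, note that the paper does not prove Theorem \ref{t:sh_as} at all: it is recalled verbatim from \cite{sh_as} for comparison with Theorems \ref{t:AS_intr} and \ref{t:E_f_intr}, so there is no in-paper argument to match; your text therefore has to stand on its own as a proof, and it does not. What you give is a plan whose entire quantitative content --- the exponents $\frac{k}{8}+\frac{1}{2(k+4)}$ and $\frac{1}{2(k+4)}2^{-k}$, the threshold $|A||B|^{1+\frac{(k+1)}{2(k+4)}2^{-k}}\le p$, the clean constant $2^{-3}$, and the loss $C_*^{(k+4)/4}\log^k(|A||B|)$ --- is exactly the ``bookkeeping'' you defer; since the theorem is nothing but this bookkeeping, deferring it means the proof is missing. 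There is also a computational slip at the very first step: Cauchy--Schwarz gives $\E^{\times}(A,B)\ge |A|^2|B|^2/|AB|$, so the hypothesis $|AB|<\mu|A|$ yields $\E^{\times}(A,B)\gg |A||B|^2/\mu$, not $|A|^2|B|^2/\mu$; the analogous correction applies to $\E^{+}(A,C)$.

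Two structural points make the proposed route doubtful even as a plan. Invoking Balog--Szemer\'edi--Gowers at each of $k$ multiplicative stages introduces losses that are polynomial in the relevant energy ratio at every step; compounded over $k$ iterations these cannot be absorbed into a factor of the form $C_*^{(k+4)/4}\log^k(|A||B|)$, and they are incompatible with an absolute constant $2^{-3}$ in the conclusion. The arguments that actually produce bounds of this shape (compare the proofs of Theorems \ref{t:T_l} and \ref{t:inc_f} in Section \ref{sec:k-convex}, which have the same ``halve the exponent $k$ times'' architecture) iterate Cauchy--Schwarz/H\"older directly on higher energies such as $\T^{+}_{2^j}$ or $\T^{\times}_{2^j}$, feeding the points--planes incidence bound into a single energy estimate, precisely so that no BSG-type refinement is needed. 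Finally, the second conclusion concerns $\max\{|AB|,|(A+\alpha)C|\}$, i.e.\ two \emph{product} sets, one of them a product of a shift of $A$; this is the ``products of shifts'' regime (cf.\ \cite{HR-NRI}, \cite{HR-NRII}), not a small perturbation of the sum--product case, and your remark that ``a generic additive shift does not disturb the configuration'' does not address how to run the argument when the additive information $|A+C|$ is replaced by multiplicative information about $A+\alpha$. As it stands, the proposal identifies the right toolbox but does not constitute a proof of the stated theorem.
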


The result above takes place in $\mathbb{R}$ as well.
In this case we do not need any conditions containing the characteristic $p$.
The main difference between Theorems \ref{t:AS_intr}, \ref{t:E_f_intr} and Theorem \ref{t:sh_as} is that $A$ is large and $B$ is small in Theorem \ref{t:sh_as} but the opposite situation takes place in Theorem \ref{t:E_f_intr} (here $|A|=|I|=|f(I)|$) and similar in Theorem \ref{t:AS_intr}.

\bigskip

The author thanks Christoph  Aistleitner and Sergei Konyagin for useful discussions.

\section{Definitions and preliminaries} 
\label{sec:def}

	Let $\Gr$ be an abelian group.
Put
$\E^{+}(A,B)$ for the {\it common additive energy} of two sets $A,B \subseteq \Gr$
(see, e.g., \cite{TV}), that is, 
$$
\E^{+} (A,B) = |\{ (a_1,a_2,b_1,b_2) \in A\times A \times B \times B ~:~ a_1+b_1 = a_2+b_2 \}| \,.
$$
If $A=B$, then  we simply write $\E^{+} (A)$ instead of $\E^{+} (A,A)$
and the quantity $\E^{+} (A)$ is called the {\it additive energy} in this case. 
Sometimes we write $\E^{+}(f_1,f_2,f_3,f_4)$ for the additive energy of four real functions, namely,
$$
\E^{+}(f_1,f_2,f_3,f_4) = \sum_{x,y,z} f_1 (x) f_2 (y) f_3 (x+z)  f_4 (y+z) \,.
$$
Thus $\E^{+}(f_1,f_2,f_3,f_4)$ pertains to additive quadruples, weighed by the values of $f_1,f_2,f_3,f_4$.
It can be shown using the H\"older inequality (see, e.g., \cite{TV}) that 
\begin{equation}\label{f:E_Ho}
\E^{+}(f_1,f_2,f_3,f_4) \le (\E^{+} (f_1) \E^{+} (f_1) \E^{+} (f_1) \E^{+} (f_1))^{1/4} \,.
\end{equation}
More generally, 
we deal with 
a higher energy
\begin{equation}\label{def:T_k_ab}
\T^{+}_k (A) := |\{ (a_1,\dots,a_k,a'_1,\dots,a'_k) \in A^{2k} ~:~ a_1 + \dots + a_k = a'_1 + \dots + a'_k \}| 
\end{equation}
and similar $\T^{+}_k (f)$  for a general function $f$.  
Sometimes we  use representation function notations like $r_{A+B} (x)$ or $r_{A+A-B}$, which counts the number of ways $x \in \Gr$ can be expressed as a sum $a+b$ or as a sum $a+a'-b$ with $a,a'\in A$, $b\in B$, respectively. 
For example, $|A| = r_{A-A}(0)$ and  $\E^{+} (A) = r_{A+A-A-A}(0)=\sum_x r^2_{A+A} (x) = \sum_x r^2_{A-A} (x)$.  
In the same way define the {\it common multiplicative energy} of two sets $A,B$
$$
\E^{\times} (A,B) =  |\{ (a_1,a_2,b_1,b_2) \in A\times A \times B \times B ~:~ a_1 b_1 = a_2 b_2 \}| \,, 
$$
further $\T^\times_k (A)$, $\T^\times_k (f)$ and so on.

\bigskip 

If $\Gr$ is an abelian group, then the Pl\"unnecke--Ruzsa inequality (see, e.g., \cite{TV}) takes place
\begin{equation}\label{f:Plunnecke}
	|nA-mA| \le \left( \frac{|A+A|}{|A|} \right)^{n+m} \cdot |A| \,,
\end{equation}
and
\begin{equation}\label{f:Plunnecke+}
|nA| \le \left( \frac{|A+A|}{|A|} \right)^{n} \cdot |A| \,.
\end{equation}

Now recall 
our current  knowledge about the Polynomial Freiman--Ruzsa Conjecture, see \cite{Sanders_2A-2A}, \cite{Sanders_3log} and \cite{TV}.
We need a simple consequence of  \cite[Proposition 2.5, Theorem  2.7]{Sanders_3log}.
Recall that if $P_1,\dots, P_d \subset \Z$ are arithmetic progressions, then $Q:=P_1+\dots+P_d$ is a {\it generalized arithmetic progression} (GAP) of {\it dimension} $d$. A generalized arithmetic progression, $Q,$ is called to be {\it proper} if $|Q| = \prod_{j=1}^d |P_j|$. For properties of generalized arithmetic progressions consult, e.g., \cite{TV}.

\begin{theorem}
	Let $A \subset \Z$ be a finite set, $|A+A| \le K|A|$ and $\kappa>3$ be any constant.
	Then there is a proper GAP  $H$ of size at most $|A|\exp (O(\log^\kappa K))$ and dimension $O(\log^\kappa K)$ such that for a set of shifts $X$, $|X| \le \exp (O(\log^\kappa K))$ one has $A\subseteq H+X$. 
\label{t:Sanders_3log}
\end{theorem}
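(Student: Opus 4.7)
The plan is to derive this statement directly from Sanders' polylogarithmic Freiman--Ruzsa results \cite[Proposition 2.5, Theorem 2.7]{Sanders_3log}. Those results together assert that, for $|A+A|\le K|A|$ and any $\kappa>3$, the set $A$ embeds into a proper coset progression of dimension $O(\log^{\kappa}K)$ and size $|A|\exp(O(\log^{\kappa}K))$, and moreover that $A$ can be covered by $\exp(O(\log^{\kappa}K))$ translates of the generating structure. The target inclusion $A\subseteq H+X$ is exactly a repackaging of these two outputs.

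Since $A\subseteq \Z$ and $\Z$ is torsion-free, every coset progression appearing in Sanders' conclusion is already a genuine GAP: the finite group component must be trivial. Thus Theorem~2.7 of \cite{Sanders_3log} supplies a proper GAP $H$ with the required dimension and cardinality bounds, and Proposition~2.5 of \cite{Sanders_3log} yields the covering set $X$ with $|X|\le \exp(O(\log^{\kappa}K))$ such that $A\subseteq H+X$. Combining these two outputs produces the claim essentially without further work.

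The only point that needs a little care is arranging properness of $H$ simultaneously with the shift structure. If the GAP produced by Sanders is not a priori proper, I would pass to a proper refinement via the standard Tao--Vu / Bilu argument, at the cost of at most an additional $\exp(O(\log^{\kappa}K))$ factor in both $|H|/|A|$ and $|X|$. Because the theorem is formulated for an arbitrary $\kappa>3$ (with the implicit constants permitted to depend on $\kappa$), such constant-order inflation in the exponent is absorbed harmlessly into the bound. The main ``obstacle'' is therefore purely bookkeeping: one has to track the accumulated $\exp(O(\log^{\kappa}K))$ losses through the Ruzsa covering step and the properness refinement, and verify that both $|X|$ and $|H|$ remain of the form $\exp(O(\log^{\kappa}K))$ and $|A|\exp(O(\log^{\kappa}K))$, respectively. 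No new combinatorial input beyond \cite{Sanders_3log} is needed.
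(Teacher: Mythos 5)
Your derivation is exactly what the paper intends: the statement is presented there without proof, as ``a simple consequence of \cite[Proposition 2.5, Theorem 2.7]{Sanders_3log}'', and your argument (torsion-freeness of $\Z$ turning coset progressions into genuine GAPs, plus the covering and properness bookkeeping absorbed into the $\exp(O(\log^{\kappa}K))$ factors) is the routine verification the paper leaves implicit. So the proposal is correct and follows essentially the same route.
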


All logarithms are to base $2.$ The signs $\ll$ and $\gg$ are the usual Vinogradov symbols.
For a positive integer $n,$ 
let 
$[n]=\{1,\ldots,n\}.$

\section{The proof of the main result} 
\label{sec:f=log}

Now we obtain Theorem \ref{t:AS_intr} from the Introduction. 
Following the method from \cite{Radziwll} we recall some required definitions.

For each prime $p \in \mathcal{P}$ take a random variable  $X_p$, which is uniformly distributed on $S^{1}$ and let all $X_p$ be independent.  
For every $n \in \mathbb{N}$, $n=p_1^{\o_1} \dots p_s^{\o_s}$, where $p_j \in \mathcal{P}$, $j\in [s]$ are different primes 
put 
$X_n:=\prod_{j=1}^s X_{p_j}^{\o_j}$. 
Then define the random  zeta function by the formula  (let $\a$ be a real number, $\a >\frac{1}{2}$, say)
\begin{equation}\label{def:random_zeta}
	\zeta_{X}(\alpha):=\sum_{n \in \mathbb{N}} \frac{X_n}{n^{\alpha}} = \prod_{p \in \mathcal{P}} \left(1-\frac{X_p}{p^\a} \right)^{-1} \,.
\end{equation}
Using the product formula \eqref{def:random_zeta} one can compute the moments of the random zeta function \eqref{def:random_zeta}, see \cite{Radziwll} (or just similar  calculations in our Lemma \ref{l:moments_new} below).

\begin{lemma}
	Let $l$ be a positive integer.
	Then 
$$
\log \mathbb{E}\left|\zeta_{X}(\alpha)\right|^{2 l} \ll\left\{\begin{array}{ll}
l \log \log l, & \alpha=1 \,, l\ge 3 \\
C (\alpha) l^{1 / \alpha}(\log l)^{-1}, & 1 / 2<\alpha<1 \,, l\ge 3 \\
l^{2} \log \left( \frac{1}{2\a-1}\right), & 1 / 2<\alpha \,, l\ge 1 \,,
\end{array}\right.
$$
where 
$C (\alpha) = \frac{\a}{1-\a} + \frac{\a}{2\a-1}$.
\label{l:Radziwill_moments} 
\end{lemma}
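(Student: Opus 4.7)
The plan is to exploit the Euler product \eqref{def:random_zeta} together with the mutual independence of the variables $\{X_p\}_{p\in\mathcal{P}}$. Since $|\zeta_X(\alpha)|^{2l} = \prod_p |1 - X_p/p^\alpha|^{-2l}$ and the factors involve disjoint random variables, the expectation factorises as
\[
\mathbb{E}|\zeta_X(\alpha)|^{2l} = \prod_{p \in \mathcal{P}} M_p, \qquad M_p := \mathbb{E}|1 - X_p/p^\alpha|^{-2l}.
\]
Expanding $(1-w)^{-l} = \sum_{k \ge 0} \binom{l+k-1}{k} w^k$ in each factor and using the orthogonality relation $\mathbb{E}[X_p^m] = \delta_{m,0}$ (as $X_p$ is uniform on $S^1$) forces $j=k$ in the resulting double sum, giving the closed form $M_p = \sum_{k \ge 0} \binom{l+k-1}{k}^2 p^{-2k\alpha}$.

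Next I would derive two per-prime bounds. In the \emph{large-prime regime} $p^\alpha \gg l$, one truncates the series at $k=1$ (controlling the tail via $\binom{l+k-1}{k} \le (2l)^k/k!$ in the relevant range) to obtain $M_p = 1 + l^2 p^{-2\alpha} + O(l^4 p^{-4\alpha})$ and hence $\log M_p \ll l^2/p^{2\alpha}$. In the \emph{small-prime regime} $p^\alpha \le l$ the crude pointwise inequality $|1 - X_p/p^\alpha|^{-1} \le (1-p^{-\alpha})^{-1}$ gives $\log M_p \le -2l \log(1 - p^{-\alpha}) \ll l/p^\alpha$, which is the sharper of the two estimates in this range.

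With these two tools, each of the three cases of the lemma follows by splitting the product at an appropriate threshold and invoking classical prime estimates. For the universal bound $l^2 \log(1/(2\alpha-1))$, I would apply the large-prime bound to all primes and use $\sum_p p^{-2\alpha} \asymp \log \zeta(2\alpha) \asymp \log(1/(2\alpha-1))$ near $\alpha = 1/2$, handling the finitely many smallest primes (where the truncation fails if $l$ is large) by the trivial bound, which adds only an $O(l^2)$ correction. For the first two cases, split at $p^\alpha \asymp l$: the small-prime contribution is $\ll l \sum_{p^\alpha \le l} p^{-\alpha}$ while the large-prime contribution is $\ll l^2 \sum_{p^\alpha > l} p^{-2\alpha}$. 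Both sums are evaluated by the prime number theorem and partial summation, yielding $l \log \log l$ when $\alpha = 1$ and a quantity of order $C(\alpha) l^{1/\alpha}/\log l$ when $1/2 < \alpha < 1$, with the stated constant $C(\alpha) = \alpha/(1-\alpha) + \alpha/(2\alpha-1)$ appearing precisely as the sum of the two partial-summation constants.

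The main obstacle will be making the large-prime bound valid right up to the threshold $p^\alpha \asymp l$, since $M_p$ behaves essentially like the modified Bessel function $I_0(2l/p^\alpha)$ and ceases to be well-approximated by $1 + l^2 p^{-2\alpha}$ once $l/p^\alpha \gtrsim 1$. Fortunately this is precisely the transition region where the two bounds meet, so it suffices to prove the uniform estimate $\log M_p \ll \min(l/p^\alpha,\, l^2/p^{2\alpha})$ and to let the split at $p^\alpha \asymp l$ select the dominant term; this bookkeeping, combined with the careful application of the prime number theorem at the threshold, constitutes the bulk of the technical work.
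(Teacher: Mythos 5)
Your proposal is correct and takes essentially the same route as the paper: the paper does not prove Lemma \ref{l:Radziwill_moments} itself but cites \cite{Radziwll} and the analogous computation in its Lemma \ref{l:moments_new}, both of which proceed exactly as you do --- factorise the expectation over primes by independence, evaluate each Euler factor via orthogonality of $X_p$ on $S^1$, bound $\log M_p \ll \min\left(l/p^{\alpha},\, l^{2}p^{-2\alpha}\right)$, and then split the sum over primes at $p^{\alpha}\asymp l$ and apply the prime number theorem with partial summation. The uniform per-prime bound you state in the last paragraph is precisely what makes the threshold bookkeeping (and the constant $C(\alpha)$) come out as claimed, so there is no gap worth noting.
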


Also, for any function $g: \mathbb{Z} \to \mathbb{C}$ consider the following random analogue of its "multiplicative"\, Fourier transform  
\begin{equation}\label{f:Fourier_random}
	\FF{g} (X) = \sum_{n \in \mathbb{N}} g(n) X_n \,.
\end{equation}
Clearly, we have an analogue of the Parseval identity  
\begin{equation}\label{f:Parseval_random}
	\mathbb{E} |\FF{g} (X)|^2 = \| g\|^2_2 \,,
\end{equation}
and, moreover, for $k\ge 1$ one has 
\begin{equation}\label{f:Tk_random}
	\mathbb{E} |\FF{g} (X)|^{2k} = \T^\times_k (g) \,.
\end{equation}
Further one can compute
$$
	 \mathbb{E} |\FF{g} (X) \zeta_X (\a)|^2 = \sum_{n_1,n_2, m_1,m_2 ~:~ n_1 m_1 = n_2 m_2} \frac{g(m_1) \ov{g} (m_2)}{(n_1 n_2)^\a} 
	 =
$$
\begin{equation}\label{f:gcd}
	 =
	 \zeta (2\a) \sum_{m_1,m_2} g(m_1) \ov{g} (m_2) \cdot \frac{\mathrm{gcd} (m_1,m_2)^{2\a}}{(m_1 m_2)^\a} 
\end{equation} 
and hence GCD sum \eqref{f:gcd} can be interpreted as the multiplicative energy (see the definition of Fourier transform \eqref{f:Fourier_random}) of our weight $g$ with the random zeta function $\zeta_X (\a)$.
It is easy to see (consult estimate \eqref{tmp:25.06_1} below) that it can be converted further to the ordinary multiplicative energy of the function $g$ and the interval $[N]$.

\bigskip  

We follow the method from \cite{Radziwll}, \cite{Bloom}, \cite{Aisleitner} to give the proof of Lemma \ref{l:Radziwill} below.
Generally speaking, our bound \eqref{f:Radziwill} is close to the optimal one, see \cite{BS}.

\begin{lemma}
	Let $w:\Z \setminus\{0\} \to \mathbb{R}^{+}$ be  a non--negative function and $N$ be a positive integer.
	Then for any positive integer $s$ one has 
\begin{equation}\label{f:Radziwill-}
\E^\times ([N],w) \ll N \| w\|_2^2 \exp\left( C\sqrt{s^{-1}\log \log N \cdot \log (\T^\times_{s+1} (w) \|w\|^{-2(s+1)}_2)} + 2\log \log N \right)  
	\ll
\end{equation}
\begin{equation}\label{f:Radziwill}
	\ll  N \| w\|_2^2 \exp\left( C \sqrt{\log \log N \cdot \log (\|w\|_1 \|w\|^{-1}_2)} + 2\log \log N \right) \,,
\end{equation}
	where $C>0$ is an absolute constant. 
\label{l:Radziwill}
\end{lemma}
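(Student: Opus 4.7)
The plan is to follow the random zeta function approach of \cite{Radziwll}, \cite{Bloom}, \cite{BS}. First I majorise the indicator by $\mathbf{1}_{[N]}(n)\le (N/n)^\alpha$ for $\alpha>1/2$ and parametrise the solutions of $n_1 m_1 = n_2 m_2$ as $n_i = m_{3-i}k/\gcd(m_1,m_2)$, $k\ge 1$, to conclude
$$
	\E^\times ([N],w) \;\le\; N^{2\alpha}\, \zeta(2\alpha) \sum_{m_1,m_2} w(m_1)w(m_2)\, \frac{\gcd(m_1,m_2)^{2\alpha}}{(m_1 m_2)^{\alpha}} \;=\; N^{2\alpha}\, \mathbb{E}\big|\widehat{w}(X)\zeta_X(\alpha)\big|^2\,,
$$
where the last equality is \eqref{f:gcd}. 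The problem is thus reduced to estimating the random $L^2$ norm of the product $\widehat{w}\cdot \zeta_X$.

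Next I apply H\"older's inequality in expectation with conjugate exponents $p=1+\delta$ and $q=1+\delta^{-1}$, writing
$$
	\mathbb{E}\big|\widehat{w}\zeta_X\big|^2 \;\le\; \|\widehat{w}\|_{2p}^{2}\cdot \|\zeta_X(\alpha)\|_{2q}^{2}\,.
$$
For the right factor, the third line of Lemma \ref{l:Radziwill_moments} applied with $l=\lceil q\rceil$ (and one more H\"older step to pass from $2l$ back to $2q$) gives $\|\zeta_X(\alpha)\|_{2q}^{2}\ll \exp\!\big(C q\log(1/(2\alpha-1))\big)$. For the left factor I interpolate $\|\widehat{w}\|_{2p}$ log-convexly between $\|\widehat{w}\|_{2} = \|w\|_2$ and $\|\widehat{w}\|_{2(s+1)}^{2(s+1)} = \T^\times_{s+1}(w)$. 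A direct computation with the relation $1/p = \theta + (1-\theta)/(s+1)$ gives $(1-\theta)/(s+1) = \delta/(s(1+\delta)) \le \delta/s$, whence
$$
	\|\widehat{w}\|_{2p}^{2} \;\le\; \|w\|_2^{2}\cdot M^{\delta/s}\,, \qquad M\;:=\;\frac{\T^\times_{s+1}(w)}{\|w\|_2^{2(s+1)}}\,.
$$

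Finally I optimise the parameters. Choosing $\alpha = 1/2 + 1/\log N$ makes $N^{2\alpha}\asymp N$ and $\log(1/(2\alpha-1)) = \log\log N - O(1)$, so the exponent of the bound reduces to $O(1) + (\delta/s)\log M + C\log\log N/\delta$; this is minimised by $\delta = \sqrt{c\, s\log\log N/\log M}$ and contributes the desired $C\sqrt{s^{-1}\log\log N\cdot \log M}$ term of \eqref{f:Radziwill-}, while the additive $2\log\log N$ soaks up the $\zeta(2\alpha)$ factor and the integer-rounding loss on $q$. The second inequality \eqref{f:Radziwill} then follows from \eqref{f:Radziwill-}: the trivial pointwise bound $|\widehat{w}(X)|\le \|w\|_1$ yields $\T^\times_{s+1}(w)\le \|w\|_1^{2s}\|w\|_2^{2}$, so $s^{-1}\log M \le 2\log(\|w\|_1/\|w\|_2)$, and substitution gives the claim. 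The delicate point is the joint optimisation: the log-convexity at non-integer $p$ and the integer rounding of $q$ must both produce only multiplicative losses that the square-root exponent of \eqref{f:Radziwill-} can absorb; otherwise the calculation is the standard balancing of two competing exponential terms.
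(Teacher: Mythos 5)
Your argument is correct and follows essentially the same route as the paper: reduce $\E^\times([N],w)$ to $\mathbb{E}|\FF{w}(X)\zeta_X(\a)|^2$, apply H\"older together with the third regime of Lemma \ref{l:Radziwill_moments}, interpolate the moments of $\FF{w}$ between $\|w\|_2^2$ and $\T^\times_{s+1}(w)$, choose $\a=\frac12+\frac{1}{\log N}$ and optimise the exponent, then use $\T^\times_{s+1}(w)\le\|w\|_1^{2s}\|w\|_2^2$ for \eqref{f:Radziwill}. The only (harmless, in fact slightly cleaner) deviation is your pointwise majorisation $\mathbf{1}_{[N]}(n)\le (N/n)^\a$ in place of the paper's dyadic pigeonholing, which is where the paper picks up the $\log^2 N$ factor accounted for by the $2\log\log N$ term.
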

\begin{proof}
	Let $L=\log N$ and $\a \in (1/2,1]$.  
	Using the Dirichlet principle, as well as estimate \eqref{f:E_Ho}, we find a positive number $U\le N$ such that  
\[
	\E^\times ([N],w) \ll L^2  \sum_{U<n_1,n_2 \le 2U,\, m_1,m_2 ~:~ n_1 m_1 = n_2 m_2} w(m_1) w (m_2) 
		\ll
\]
\[
		\ll 
		L^2 U^{2\a} \sum_{U<n_1,n_2 \le 2U,\, m_1,m_2 ~:~ n_1 m_1 = n_2 m_2} \frac{w(m_1) w (m_2)}{(n_1 n_2)^\a} \,. 
\]
	In terms of the random zeta function \eqref{def:random_zeta}, we see that the last sum is, clearly, 
	does not exceed 
$$
	\sum_{n_1,n_2, m_1,m_2 ~:~ n_1 m_1 = n_2 m_2} \frac{w(m_1) w (m_2)}{(n_1 n_2)^\a}  = \mathbb{E} |\FF{w} (X) \zeta_X (\a)|^2 \,.
$$
	Thus
\begin{equation}\label{tmp:25.06_1}
	\E^\times ([N],w) \ll 	L^2 U^{2\a} \mathbb{E} |\FF{w} (X) \zeta_X (\a)|^2
\end{equation}
	and our task is to estimate the last expectation. 
	Let $l\ge 3$ be an integer parameter, which  we will choose later.
	Also, let $\T_{s+1} = \T^\times_{s+1} (w)$.   
	Thanks to   identities  \eqref{f:Parseval_random}, \eqref{f:Tk_random}, Lemma \ref{l:Radziwill_moments}  and the H\"older inequality, we have 
\begin{equation}\label{f:min_alpha--}
	\mathbb{E} |\FF{w} (X) \zeta_X (\a)|^2 \le  \mathbb{E}^{1-1/l} |\FF{w} (X)|^{2+2/(l-1)} \cdot \mathbb{E}^{1/l} |\zeta_X (\a)|^{2l} 
	\le
\end{equation}
\begin{equation}\label{f:min_alpha-}
	\le
	\left(\mathbb{E} |\FF{w} (X)|^2 \right)^{\frac{s(l-1)-1}{sl}} \left(\mathbb{E} |\FF{w} (X)|^{2s+2} \right)^{\frac{1}{sl}} \cdot \mathbb{E}^{1/l} |\zeta_X (\a)|^{2l} 
=
	\| w\|_2^2 \T^{\frac{1}{sl}}_{s+1}  \|w\|_2^{-\frac{2s+2}{sl}}  \cdot \mathbb{E}^{1/l} |\zeta_X (\a)|^{2l} 
\end{equation}
\begin{equation}\label{f:min_alpha}
	\ll \| w\|_2^2  \exp\left( \frac{1}{ls} \log (\T_{s+1} \|w\|^{-2(s+1)}_2) + \min\left\{ \frac{C(\alpha) l^{1/\a}}{l \log l}, O\left( l\log\frac{1}{2\a-1}\right) \right\} \right)  \,.
\end{equation}
	Put
	$X= s^{-1} \log (\T_{s+1} \|w\|^{-2(s+1)}_2)  \ge 0$.
	First of all, take the second term in the minimum in \eqref{f:min_alpha}.
	In this case we see that the optimal choice of $l$ is $l\sim X^{1/2} \log^{-1/2} (1/(2\a-1))$.
	Hence
\[
	\mathbb{E} |\FF{w} (X) \zeta_X (\a)|^2 \ll \| w\|_2^2  \exp\left( O\left( X^{1/2} \log^{1/2} \frac{1}{2\a-1} \right) \right)  \,.
\]
	Now we take $\a = \frac{1}{2} + \frac{1}{\log N}$ (one can check that this choice of $\a$ allows us to choose  $l\ge 3$) and using $U\le N$, we get in view of \eqref{tmp:25.06_1} that 
\[
	\E^\times ([N],w) \ll L^2 N \| w\|_2^2 \exp\left( O\left(\sqrt{s^{-1}\log \log N \cdot \log (\T_{s+1} \|w\|^{-2(s+1)}_2)} \right) \right) \,.
\]
	To obtain \eqref{f:Radziwill} just notice that $\T_{s+1} \le \|w\|_1^{2s} \| w\|_2^2$. 
	This completes the proof. 
$\hfill\Box$
\end{proof}

\bigskip

Using lemma above we obtain in particular, Theorem \ref{t:AS} from the Introduction.

\begin{theorem}
	Let $A,S \subset \Z$ be finite sets and $0\le \a <1/6$ be any number.
	Suppose that $|A+A| \le K|A|$ with 
\begin{equation}\label{cond:AS_1}
	K \ll \exp(\log^{\a} |A|) 
\end{equation}
	and 
\begin{equation}\label{cond:AS_2}
	|S| \le \exp\left(\frac{\log^{2-6\a}|A|}{\log \log |A|} \right) \,.
\end{equation}
	Then there are at least $\exp(-O(\log^{1-6\a} |A|))$ elements $a\in A$ such that  
\begin{equation}\label{f:AS}
	|(A-a)S| \gg |S| \cdot \exp(O(\log^{1-3\a} |A|)) \,.
\end{equation}
	In addition, if $|S+S| \le K_* |S|$, then \eqref{f:AS} takes place provided 
\begin{equation}\label{cond:AS_2+}
	K_* \log |S| \le \exp\left(\frac{\log^{2-6\a}|A|}{\log \log |A|} \right) \,.
\end{equation}	
\label{t:AS}
\end{theorem}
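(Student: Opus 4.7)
The plan is to apply Lemma \ref{l:Radziwill} to bound the common multiplicative energy $\E^\times(A-a, S)$ for many $a \in A$, and then extract the lower bound on $|(A-a) S|$ via Cauchy--Schwarz:
\[
|(A-a)S| \ge \frac{|A|^2 |S|^2}{\E^\times(A-a,S)}.
\]
Thus \eqref{f:AS} will follow once we establish $\E^\times(A-a, S) \ll |A|^2 |S| \cdot \exp(-c\log^{1-3\alpha}|A|)$ for suitable $a$, and producing many such $a$ will come from averaging against a structural decomposition of $A$.

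To apply Lemma \ref{l:Radziwill}, I would dyadically decompose $S = \bigsqcup_j S_j$ with $S_j = S \cap [2^j, 2^{j+1})$, so that each $S_j \subseteq [N_j]$ with $N_j = 2^{j+1}$. Using the symmetry of multiplicative energy and Lemma \ref{l:Radziwill} with weight $w = 1_{A-a}$ (so $\|w\|_1/\|w\|_2 = |A|^{1/2}$),
\[
\E^\times(A-a, S_j) \le \E^\times([N_j], 1_{A-a}) \ll N_j |A| \exp\!\bigl( C\sqrt{\log\log N_j \cdot \log|A|} + 2\log\log N_j\bigr).
\]
Combining with $|(A-a)S| \ge \max_j |(A-a) S_j|$ and Cauchy--Schwarz reduces matters to locating a dyadic scale at which $|S_j|^2/N_j$ is sufficiently large relative to the Lemma \ref{l:Radziwill} error factor.

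The condition \eqref{cond:AS_2} is calibrated so that, for the relevant $N_j$, one has $\log\log N_j \cdot \log|A| \ll \log^{2-6\alpha}|A|$, hence $\sqrt{\log\log N_j \log|A|} \ll \log^{1-3\alpha}|A|$, matching the exponent in \eqref{f:AS}. The condition \eqref{cond:AS_1} enters via Theorem \ref{t:Sanders_3log} applied to $A$ with some $\kappa$ slightly above $3$: this produces a dense sub-fiber $A_0 = A \cap (x+H)$ of density at least $\exp(-O(\log^{\kappa\alpha}|A|))$ contained in a translate of a proper GAP $H$. Shifting by $a_0 \in A_0$, the set $A_0 - a_0 \subseteq H - H$ inherits additive structure that keeps the Cauchy--Schwarz/Lemma \ref{l:Radziwill} argument quantitatively tight; an averaging argument over $a \in A_0$ then yields the claimed fraction $\exp(-O(\log^{1-6\alpha}|A|))$ of good anchors.

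The main obstacle is the simultaneous balance of three competing losses: (i) the GCD-sum error $\sqrt{\log\log N \cdot \log|A|}$ from Lemma \ref{l:Radziwill}; (ii) the Sanders-type loss $\exp(O(\log^\kappa K))$ from passing to a dense sub-fiber of $A$; and (iii) the distributional cost of restricting to a single dyadic scale of $S$. The threshold $\alpha < 1/6$, together with the precise shape of \eqref{cond:AS_2}, corresponds exactly to the point where the gain $\exp(\log^{1-3\alpha}|A|)$ dominates all three (noting $\alpha(\kappa + 3) < 1$ with $\kappa > 3$ forces $\alpha < 1/6$). The second statement, under $|S+S| \le K_* |S|$, follows by applying Theorem \ref{t:Sanders_3log} also to $S$ to embed most of $S$ into a short interval; this effectively replaces $|S|$ by $K_*\log|S|$ in the exponential in \eqref{cond:AS_2}, yielding \eqref{cond:AS_2+}.
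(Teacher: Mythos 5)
Your overall toolkit is right (Sanders' Theorem \ref{t:Sanders_3log}, the GCD--sum Lemma \ref{l:Radziwill}, Cauchy--Schwarz), but the way you feed Lemma \ref{l:Radziwill} is the wrong way round, and this is a genuine gap. You dyadically decompose $S$ by magnitude and apply the lemma with the interval $[N_j]\supseteq S_j$ and weight $w=1_{A-a}$, so the bound you get is $\E^\times(A-a,S_j)\ll N_j|A|\exp\bigl(C\sqrt{\log\log N_j\cdot\log|A|}+\dots\bigr)$. But the theorem puts no restriction on the \emph{magnitudes} of the elements of $S$, only on its cardinality: the $N_j$ can be arbitrarily large compared with $|A|$ and $|S|$ (e.g.\ doubly exponential in $|S|$), so neither your calibration claim ``$\log\log N_j\cdot\log|A|\ll\log^{2-6\a}|A|$ for the relevant $N_j$'' nor the existence of a dyadic scale with $|S_j|^2/N_j$ large has any justification --- if the elements of $S$ are spread so that each dyadic block contains one point, every scale gives $|S_j|^2/N_j\le N_j^{-1}$, and the Cauchy--Schwarz step $|(A-a)S_j|\ge|A|^2|S_j|^2/\E^\times(A-a,S_j)$ yields nothing. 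Note also that condition \eqref{cond:AS_2} bounds $\log|S|$, not $\log|A|$, so even with bounded $N_j$ your error factor $\exp(C\sqrt{\log\log N_j\cdot\log|A|})$ is not the one the hypotheses are built to control.

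The missing idea is to make the \emph{interval} come from the structured side $A$ and the \emph{weight} from $S$. The paper uses Theorem \ref{t:Sanders_3log} not merely to get a dense fiber in a translate of the GAP $H$, but to descend to a single one--dimensional progression $P=P_1$ with $|P|\ge|H|^{1/d}$ and $|A\cap(P+y)|\ge|P|\exp(-O(\log^\kappa K))$; then for $a\in A\cap(P+y)$ the set $D_*=A\cap(P+y)-a$ sits inside $P-P$, which is a \emph{dilate of an interval of length $O(|P|)$}, and multiplicative energy is dilation invariant. Hence Lemma \ref{l:Radziwill} applies with $N\sim|P|$ and $w=1_S$, giving $\E^\times(D_*,S)\ll|P||S|\exp\bigl(C\sqrt{\log\log|P|\cdot\log|S|}+2\log\log|P|\bigr)$, where now \eqref{cond:AS_2} exactly controls the square root and the main gain $\exp(c\log|A|/\log^\kappa K)\ge\exp(c\log^{1-3\a-o(1)}|A|)$ comes from $|P|\ge|H|^{1/d}$; every $a$ in the fiber works, which is where the count of good $a$ comes from. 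Your $A_0-a_0\subseteq H-H$ does not suffice, since $H-H$ is a $d$-dimensional GAP with $d=O(\log^\kappa K)$, not (a dilate of) an interval, so Lemma \ref{l:Radziwill} cannot be invoked for it. Finally, for the additional statement under $|S+S|\le K_*|S|$ the paper does not re-embed $S$ into a short interval (Sanders would give a GAP, not an interval, and not the stated dependence); it reruns the same argument using \eqref{f:Radziwill-} with $s=1$ and Solymosi's bound $\E^\times(S)\ll K_*^2|S|^2\log|S|$ in place of the crude estimate $\T^\times_{s+1}\le\|w\|_1^{2s}\|w\|_2^2$, which is how $K_*\log|S|$ enters in \eqref{cond:AS_2+}.
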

\begin{proof} 
	Using Theorem \ref{t:Sanders_3log} we find a proper GAP  $H$ of size at most $|A|\exp (O(\log^\kappa K))$ and dimension $d=O(\log^\kappa K)$ such that for a set of shifts $X$, $|X| \le \exp (O(\log^\kappa K))$ one has $A\subseteq H+X$.
	Here $\kappa >3$ is any number.
	We have $H=P_1+ \dots + P_d$, where the sum is direct and all $P_j$ are arithmetic progressions.
	Without loss of generality we can assume that for $P=P_1$ one has $|P| \ge |H|^{1/d}$.
	Also, there is $x\in X$ such that $|A\cap (H+x)| \ge |A|/|X|$ and hence
\[
	|A| \cdot \exp(-O(\log^\kappa K)) \le |A|/|X| \le |A\cap (H+x)| \le \sum_{y\in P_2+\dots+P_d} |A\cap (P+y+x)| \,.
\]
	Thus there exists  $y\in P_2+\dots+P_d +x$ such that 
\[
	|P| \cdot \exp(-O(\log^\kappa K)) \le |P| |A|/|H| \cdot \exp(-O(\log^\kappa K)) 
		=
\]
\begin{equation}\label{tmp:25.09_2}
		= 
		|A| \cdot \exp(-O(\log^\kappa K)) (|P_2| \dots |P_d|)^{-1} \le  |A\cap (P+y)| \,.
\end{equation}
	For any $a\in A\cap (P+y)$, we have $D_*:=A\cap (P+y) - a \subseteq (A-A) \cap (P-P)$. 
	Applying Lemma \ref{l:Radziwill}, the lower bound $|P| \ge |H|^{1/d}$ 
	and using the Holder inequality several times, as well as estimate \eqref{tmp:25.09_2}, we obtain 
\[
	|D_* S| \ge \frac{|D_*|^2 |S|^2}{\E^\times (P-P,S)} \gg  \frac{|A\cap (P+y)|^2 |S|^2}{\E^\times (P,S)} 
		\gg |S| |P| \cdot \exp(-O(\log^\kappa K + \sqrt{\log \log |P| \cdot \log |S|}))
\]
\[
	\gg |S| \cdot \exp\left(O\left(\frac{\log |A|}{\log^\kappa K} - \log^\kappa K - \sqrt{\log \log |A| \cdot \log |S|} \right) \right) \,.
\]
	Thanks to our conditions \eqref{cond:AS_1}, \eqref{cond:AS_2}, we obtain 
\begin{equation}\label{tmp:30.09_1}
	|D_* S| \gg |S| \cdot \exp\left(O\left(\frac{\log |A|}{\log^\kappa K} - \sqrt{\log \log |A| \cdot \log |S|}\right)\right) 
	\gg 
	|S| \cdot\exp\left(O\left(\frac{\log |A|}{\log^\kappa K} \right)\right) 
	\gg 
\end{equation}
\begin{equation}\label{tmp:30.09_2}
	\gg |S| \cdot \exp(O(\log^{1-3\a} |A|))
\end{equation}
as required. 

	To obtain \eqref{cond:AS_2+}  just repeat the previous calculations and use Lemma \ref{l:Radziwill} with the parameter $s=1$. 
	By Solymosi's result \cite{soly} we know that $\E^\times (S) \ll |S+S|^2 \log |S| \ll K^2_* |S|^2 \log |S|$
	and we arrive to an analogue of  \eqref{tmp:30.09_1}, \eqref{tmp:30.09_2}  
\[
	|D_* S| \gg |S| \cdot \exp\left(O\left(\frac{\log |A|}{\log^\kappa K} - \sqrt{\log \log |A| \cdot \log (K_*^2 \log |S|)}\right)\right) \,.
\]
	This completes the proof. 
$\hfill\Box$
\end{proof}

\bigskip

Now consider another zeta function, which allows to make calculations above  better and even simpler. 
Let $\a >0$ be a real number and $z$ be a positive integer.
Then 
\begin{equation}\label{def:random_zeta2}
	\mathcal{Z}_{X}(\alpha):=\prod_{z \le p < 2z} \left(1+\frac{X_p}{p^\a} \right) \,.
\end{equation}
Denote by $\mathcal{P}_z$ the set of all primes in $[z,2z)$ and let $g$ be any non--negative function. 
Since the support of 
$\mathcal{Z}_{X}(\alpha)$
coincides with  all possible products of primes from $\mathcal{P}_z$ and $1$, we see that 
the function $\mathcal{Z}_{X}(\alpha)$ can be used to calculate the common energy of the set $\mathcal{P}_z$ with any function $g$, namely, 
\begin{equation}\label{f:E(g,Z)}
	\E^\times (g,\mathcal{P}_z) < 4^\a z^{2\a} \cdot \mathbb{E} |\FF{g} (X) \mathcal{Z}_{X}(\alpha)|^2 \,.
\end{equation}
Thus to 
compute 
$\E^\times (g,\mathcal{P}_z)$ we need to estimate all moments of the function  $\mathcal{Z}_{X}(\alpha)$ similar to Lemma  \ref{l:Radziwill_moments}.

\begin{lemma}
	Let $\a>0$ be any real number, $l$ be a positive integer and $l\le z^\a$. 
	Then  
\[
	\log \mathbb{E} |\mathcal{Z}_{X}(\alpha)|^{2l} \ll \frac{l^2 z^{1-2\a}}{\log z} \,.
\]
\label{l:moments_new}
\end{lemma}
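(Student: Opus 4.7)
The plan is to exploit independence of the random variables $X_p$ to factor the $2l$-th moment as a product over primes in $\mathcal{P}_z$, estimate each local factor, and then sum up using the Prime Number Theorem.

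First, since $\{X_p\}_{p}$ are independent and $\mathcal{Z}_X(\alpha)$ is a product over $p \in \mathcal{P}_z$, we have
\[
    \mathbb{E} |\mathcal{Z}_X(\alpha)|^{2l}
    = \prod_{z\le p<2z} \mathbb{E}\left| 1+\frac{X_p}{p^\alpha}\right|^{2l}.
\]
For each prime $p$, writing $(1+X_p/p^\alpha)^l = \sum_{j=0}^l \binom{l}{j} X_p^{j}/p^{j\alpha}$ and $(1+\bar X_p/p^\alpha)^l$ similarly, and using that $\mathbb{E}[X_p^{j-k}]=\delta_{j,k}$ since $X_p$ is uniform on the unit circle, I get
\[
    \mathbb{E}\left|1+\frac{X_p}{p^\alpha}\right|^{2l}
    = \sum_{j=0}^l \binom{l}{j}^{2} p^{-2j\alpha}.
\]

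Next, I use the bound $\binom{l}{j}\le l^{j}/j!$ to compare the above sum with a modified Bessel function: writing $y=l p^{-\alpha}$,
\[
    \sum_{j=0}^l \binom{l}{j}^{2} p^{-2j\alpha}
    \le \sum_{j=0}^\infty \frac{y^{2j}}{(j!)^{2}} = I_0(2y)
    \le \sum_{j=0}^\infty \frac{y^{2j}}{j!} = \exp(y^{2}),
\]
where the last inequality uses $(j!)^{2}\ge j!$. The hypothesis $l\le z^{\alpha}\le p^{\alpha}$ guarantees $y\le 1$ (which is not essential here, but keeps the local factor small enough for the estimate to be useful). Thus
\[
    \log \mathbb{E}\left|1+\frac{X_p}{p^\alpha}\right|^{2l} \le \frac{l^{2}}{p^{2\alpha}}.
\]

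Finally, summing over $p\in\mathcal{P}_z$ and invoking the Prime Number Theorem in the form $|\mathcal{P}_z|\ll z/\log z$, together with $p^{-2\alpha}\le z^{-2\alpha}$,
\[
    \log \mathbb{E}|\mathcal{Z}_X(\alpha)|^{2l}
    \le \sum_{z\le p<2z} \frac{l^{2}}{p^{2\alpha}}
    \le \frac{l^{2}}{z^{2\alpha}}\cdot|\mathcal{P}_z|
    \ll \frac{l^{2}z^{1-2\alpha}}{\log z},
\]
which is the claimed bound. The only step that requires any care is the local estimate at each prime; everything else is independence plus the Prime Number Theorem. The restriction $l\le z^{\alpha}$ is precisely what makes the Bessel-function bound a genuinely effective estimate.
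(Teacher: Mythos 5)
Your proof is correct and follows essentially the same route as the paper: factor the $2l$-th moment over primes by independence, compute the local factor as $\sum_{j}\binom{l}{j}^2 p^{-2j\alpha}$, bound its logarithm by $O(l^2/p^{2\alpha})$, and sum over $\mathcal{P}_z$ via the Prime Number Theorem. The only (harmless) difference is that your local estimate via $\binom{l}{j}\le l^j/j!$ and $\sum_j y^{2j}/(j!)^2\le e^{y^2}$ does not even need the hypothesis $l\le z^{\alpha}$, which the paper invokes at that step.
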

\begin{proof} 
	In view of the fact that all the variables $X_p$, $p\in \mathcal{P}_z$ are independent,  we have 
\[
	\mathbb{E} |\mathcal{Z}_{X}(\alpha)|^{2l} = \prod_{z \le p < 2z} \mathbb{E} \left(1+\frac{X_p}{p^\a} \right)^l \left(1+\frac{\ov{X}_p}{p^\a} \right)^{l}
	:= 
	\prod_{z \le p < 2z} E_{l} (p) \,, 
\]
	and 
\[
	E_{l} (p) = \frac{1}{2\pi} \int_{0}^{2\pi} \left(1+\frac{e^{i\theta}}{p^\a} \right)^l \left(1+\frac{e^{-i\theta}}{p^\a} \right)^{l}\, d\theta 
	=
	\sum_{n=0}^{l} \binom{l}{n}^2 \frac{1}{p^{2\a n}} \,.
\]
	Using the condition $l\le z^\a$, we obtain $\log E_{l} (p) \le 2l^2/p^{2\a}$.  
	Hence
\[
	\log \mathbb{E} |\mathcal{Z}_{X}(\alpha)|^{2l} \le 2l^2 \sum_{z \le p < 2z} p^{-2\a} \ll \frac{l^2 z^{1-2\a}}{\log z} \,.
\] 
	This completes the proof. 
$\hfill\Box$
\end{proof}

\bigskip 

Now we formulate an analogue of Lemma \ref{l:Radziwill} allowing to calculate the  common energy of the set $\mathcal{P}_z$ with a general weight $w$.

\begin{proposition}
	Let $w:\Z \setminus\{0\} \to \mathbb{R}^{+}$ be  a non--negative function and $s,z$ be positive integers.
	Suppose that 
\begin{equation}\label{cond:l}
	\log (\T^\times_{s+1} (w) \|w\|^{-2(s+1)}_2) \le \frac{s z}{\log z} \,.
\end{equation}
Then for any $\a>0$ the following holds 
\begin{equation}\label{f:Radziwill-_new}
	\E^\times (\mathcal{P}_z,w) \ll z^{2\a} \| w\|_2^2 \exp\left( C z^{1/2-\a}   \sqrt{s^{-1} \log^{-1} z \cdot \log (\T^\times_{s+1} (w) \|w\|^{-2(s+1)}_2)} \right) \,.
\end{equation}
	In particular, for any $\eps>0$ one has 
\begin{equation}\label{f:Radziwill-_new_eps}
\E^\times (\mathcal{P}_z,w) \ll (\eps z)^{2} \| w\|_2^2 \exp\left( C \eps^{-1} z^{-1/2}   \sqrt{s^{-1} \log^{-1} z \cdot \log (\T^\times_{s+1} (w) \|w\|^{-2(s+1)}_2) } 
 \right) \,.
\end{equation}
\label{p:E(P_z,w)}
\end{proposition}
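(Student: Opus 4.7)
The plan is to mimic exactly the argument that was used for Lemma \ref{l:Radziwill}, but with the finite Euler product $\mathcal{Z}_X(\alpha)$ replacing the random zeta $\zeta_X(\alpha)$, so that Lemma \ref{l:moments_new} plays the role previously played by Lemma \ref{l:Radziwill_moments}. First I would start from the identity \eqref{f:E(g,Z)} applied to $g=w$, which already translates the combinatorial quantity $\E^\times(\mathcal{P}_z,w)$ into the probabilistic quantity $\mathbb{E}|\widehat{w}(X)\mathcal{Z}_X(\alpha)|^2$ up to the factor $z^{2\alpha}$.

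Next I would apply H\"older's inequality in the same way as in \eqref{f:min_alpha--}--\eqref{f:min_alpha-}: split $|\widehat{w}\mathcal{Z}_X|^2$ into a $\widehat{w}$-factor and a $\mathcal{Z}_X$-factor with exponents $1-1/l$ and $1/l$, then interpolate the $\widehat{w}$-factor between the second moment and the $(2s+2)$-th moment using the Parseval identity \eqref{f:Parseval_random} and the higher-moment identity \eqref{f:Tk_random}. This gives
\[
\mathbb{E}|\widehat{w}(X)\mathcal{Z}_X(\alpha)|^2 \ll \|w\|_2^2 \cdot \bigl(\T^\times_{s+1}(w)\,\|w\|_2^{-2(s+1)}\bigr)^{1/(sl)} \cdot \mathbb{E}^{1/l}|\mathcal{Z}_X(\alpha)|^{2l}.
\]
For the last factor I would invoke Lemma \ref{l:moments_new}, whose hypothesis $l\le z^\alpha$ becomes the binding constraint in the subsequent optimisation; this gives an exponential of order $l z^{1-2\alpha}/\log z$.

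The third step is the optimisation. Writing $Y:=s^{-1}\log\bigl(\T^\times_{s+1}(w)\|w\|_2^{-2(s+1)}\bigr)$, the exponent that needs minimising has the form $Y/l + C l z^{1-2\alpha}/\log z$; the minimum is attained at $l \asymp \sqrt{Y\log z\,/\,z^{1-2\alpha}}$ and equals $C\sqrt{Y\cdot z^{1-2\alpha}/\log z}=Cz^{1/2-\alpha}\sqrt{Y/\log z}$. The admissibility condition $l\le z^\alpha$ at this optimum simplifies exactly to $Y\le Cz/\log z$, which is hypothesis \eqref{cond:l}. Plugging back into \eqref{f:E(g,Z)} yields \eqref{f:Radziwill-_new}. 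For \eqref{f:Radziwill-_new_eps} I would simply specialise $\alpha = 1 + \log\varepsilon/\log z$, observing that this choice makes $z^{2\alpha}=\varepsilon^2 z^2$ and $z^{1/2-\alpha}=\varepsilon^{-1}z^{-1/2}$, so that the previous bound rewrites in the claimed form (condition \eqref{cond:l} is unaffected, since the product $c z^{2\alpha}$ remains $Cz/\log z$ regardless of $\alpha$).

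The only substantive obstacle is checking that the integer parameter $l$ chosen by the continuous optimisation can be taken to be at least $3$ and at most $z^\alpha$; the first is free after absorbing constants (or by trivially bounding the exponential factor by a constant when $Y$ is tiny), and the second is precisely \eqref{cond:l}. The rest is bookkeeping of the H\"older exponents, identical in form to the passage from \eqref{f:min_alpha--} to \eqref{f:min_alpha} in the proof of Lemma \ref{l:Radziwill}.
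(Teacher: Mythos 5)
Your proposal is correct and follows essentially the same route as the paper: the bound \eqref{f:E(g,Z)}, the H\"older/interpolation step as in \eqref{f:min_alpha--}--\eqref{f:min_alpha-}, Lemma \ref{l:moments_new} for the moments of $\mathcal{Z}_X(\alpha)$, the choice $l \asymp (Y\log z/z)^{1/2}z^{\alpha}$ (whose admissibility $l\le z^\alpha$ is exactly \eqref{cond:l}), and finally $\alpha = 1-\log(1/\eps)/\log z$ to deduce \eqref{f:Radziwill-_new_eps}. This is the paper's proof, with your integrality/smallness-of-$Y$ caveat being harmless bookkeeping.
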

\begin{proof} 
	Let 
	$X=s^{-1}  \log (\T^\times_{s+1} (w) \|w\|^{-2(s+1)}_2)$.
	Choose  $l=(X\log z /z)^{1/2} z^\a$. 
	Thanks to our assumption \eqref{cond:l}, we have $l \le z^\a$. 
	Using Lemma \ref{l:moments_new} as in lines \eqref{f:min_alpha--}---\eqref{f:min_alpha}, combining with  bound \eqref{f:E(g,Z)}, we get 
\[
\E^\times (\mathcal{P}_z,w)  \ll z^{2\a} \| w\|_2^2  \exp\left( \frac{1}{ls} \log (\T^\times_{s+1} (w) \|w\|^{-2(s+1)}_2) + \frac{l z^{1-2\a}}{\log z}  \right)
\ll
\]
\begin{equation}\label{f:min_alpha_new}  
\ll 
	z^{2\a} \| w\|_2^2  \exp\left( C z^{1/2-\a}   \sqrt{s^{-1} \log^{-1} z \cdot \log (\T^\times_{s+1} (w) \|w\|^{-2(s+1)}_2)} \right) \,.
\end{equation}
	Taking $\a = 1 - \frac{\log (1/\eps)}{\log z}$, we obtain  
\begin{equation}\label{f:min_alpha_new'}  
	\E^\times (\mathcal{P}_z,w)  \ll  (\eps z)^{2} \|w\|_2^2 \exp\left( C \eps^{-1} z^{-1/2}   \sqrt{s^{-1} \log^{-1} z \cdot \log (\T^\times_{s+1} (w) \|w\|^{-2(s+1)}_2) } \right) 
\end{equation}
	as required. 
$\hfill\Box$
\end{proof}

\bigskip 

Now we derive an upper bound for the common energy of the set of the primes in a segment and an arbitrary set.
It shows that in a sense the primes  "repulse"\, the other sets.  

\begin{theorem}
	Let $S \subset \Z$ be a set, $l$ be an integer number, and $\mathcal{P}^{(l)} := [l] \cap \mathcal{P}$. 
	Then for any $d\neq 0$ the condition 
	\begin{equation}\label{cond:S_A_new}
	\log |S| \ll \frac{\eps^{} l}{\log l}    
	\end{equation}
	implies 
	\begin{equation}\label{f:S_A_new_K}
		\E^{\times} (d \cdot \mathcal{P}^{(l)},S) \le \eps |\mathcal{P}_l|^2 |S| \,.
	\end{equation}
\label{t:AP_in_G}
\end{theorem}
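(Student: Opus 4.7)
The plan is to dyadically decompose $\mathcal{P}^{(l)}$ and apply Proposition~\ref{p:E(P_z,w)} to each piece with $w = 1_S$. First I reduce to $d=1$: multiplication by a nonzero integer is injective, so $\E^\times(d\cdot\mathcal{P}^{(l)},S) = \E^\times(\mathcal{P}^{(l)},S)$. Writing $\mathcal{P}^{(l)} = \bigsqcup_{j}\bigl(\mathcal{P}^{(l)}\cap[2^j,2^{j+1})\bigr)$ and using $r_{(A\sqcup B)S}=r_{AS}+r_{BS}$, Minkowski's inequality gives
\begin{equation*}
\sqrt{\E^\times(\mathcal{P}^{(l)},S)} \;\le\; \sum_{j\le\log l}\sqrt{\E^\times(\mathcal{P}_{2^j}\cap[1,l],\,S)}.
\end{equation*}

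With $w=1_S$ one has $\|w\|_2^2=|S|$ and trivially $\T^\times_{s+1}(w)\le |S|^{2s+1}$, so $\log(\T^\times_{s+1}(w)\|w\|_2^{-2(s+1)})\le s\log|S|$. Taking $s=1$, condition \eqref{cond:l} reduces to $\log|S|\le z/\log z$. Using \eqref{f:Radziwill-_new_eps} with the free parameter there chosen as $\eps' \asymp \sqrt{\eps}/\log z$, the prefactor $(\eps' z)^2 |S|$ becomes $\asymp \eps\,|\mathcal{P}_z|^2|S|$ by PNT, while the exponential factor is of order $\exp(O(\eps^{-1/2}\sqrt{\log z\,\log|S|/z}))$, which is $O(1)$ precisely when $z/\log z \gtrsim \log|S|/\eps$. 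Hypothesis \eqref{cond:S_A_new} consequently permits a threshold $z_0 \asymp (\log|S|/\eps)\log\log|S|$, and for every dyadic piece $\mathcal{P}_z$ with $z\ge z_0$ one obtains
\begin{equation*}
\E^\times(\mathcal{P}_z,S)\;\ll\;\eps\,|\mathcal{P}_z|^2\,|S|.
\end{equation*}

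For the small dyadic scales $z<z_0$, I use the trivial bound $\E^\times(\mathcal{P}_z,S)\le|\mathcal{P}_z|^2|S|$, and summing these contributions yields $\sum_{z<z_0}\sqrt{\E^\times(\mathcal{P}_z,S)}\le \sqrt{|S|}\,|\mathcal{P}^{(z_0)}|\le z_0\sqrt{|S|}$. Inserting both contributions into the Minkowski inequality gives
\begin{equation*}
\sqrt{\E^\times(\mathcal{P}^{(l)},S)}\;\ll\;\sqrt{\eps|S|}\,|\mathcal{P}^{(l)}|\;+\;\sqrt{|S|}\,z_0,
\end{equation*}
and \eqref{cond:S_A_new} together with PNT forces the second term to be $\ll \sqrt{\eps|S|}\,|\mathcal{P}^{(l)}|$. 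Squaring then delivers \eqref{f:S_A_new_K}.

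The main obstacle I expect is the joint calibration of the dyadic threshold $z_0$ with the free parameter $\a$ (equivalently $\eps'$) inside Proposition~\ref{p:E(P_z,w)}: keeping the exponential factor in \eqref{f:Radziwill-_new_eps} bounded forces $z_0$ from below in terms of $\log|S|/\eps$, while absorbing the contribution of small primes into an $O(\eps)$-fraction of the target forces $z_0$ from above in terms of $\sqrt{\eps}\,l/\log l$. Reconciling these two requirements is where the quantitative shape of \eqref{cond:S_A_new} enters, and one must carefully track the polylogarithmic losses arising from PNT and from the dyadic summation; a more clever choice of $s>1$ (or a finer weight than $1_S$) may be needed to avoid spurious half-powers of $\eps$.
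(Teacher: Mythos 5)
Your strategy coincides with the paper's: reduce to $d=1$, split $\mathcal{P}^{(l)}$ into dyadic blocks $\mathcal{P}_z$, apply Proposition \ref{p:E(P_z,w)} with $w=1_S$, $s=1$ to each block with the free parameter of size about $\sqrt{\eps}$ divided by a logarithm, and recombine over the scales. (The paper takes a single $\eps_*$ with $\eps_*^2=\eps/\log^2 l$, sums the resulting bounds over all $2^j\le l/2$ and asserts the $z\asymp l$ term dominates; you recombine via Minkowski and tune $\eps'$ per scale, which is cleaner.) The reduction to $d=1$, the computation $\log(\T^\times_2(1_S)|S|^{-2})\le\log|S|$ and the per-scale exponent calculation are all fine. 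The gap is exactly the step you flag at the end: the absorption of the small scales does not follow from \eqref{cond:S_A_new}. Below your cutoff $z_0\asymp\eps^{-1}\log|S|\log\log|S|$ (the smallest $z$ for which the exponential factor in \eqref{f:Radziwill-_new_eps} with $\eps'\asymp\sqrt{\eps}/\log z$ is $O(1)$) you use the trivial bound, and after Minkowski and squaring these scales contribute about $|S|\,(z_0/\log z_0)^2\asymp|S|\,(\eps^{-1}\log|S|)^2$. Requiring this to be $\ll\eps|\mathcal{P}^{(l)}|^2|S|\asymp\eps|S|\,l^2/\log^2 l$ amounts to $\log|S|\ll\eps^{3/2}\,l/\log l$ up to $\log\log$ factors, which is strictly stronger than \eqref{cond:S_A_new} when $\eps$ is small; so the sentence ``\eqref{cond:S_A_new} together with PNT forces the second term to be $\ll\sqrt{\eps|S|}\,|\mathcal{P}^{(l)}|$'' is not justified as written.

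Two remarks on closing it. First, the fix is not a different $s$ or a finer weight (with $w=1_S$ the trivial bound $\T^\times_{s+1}(w)\le|S|^{2s+1}$ makes both condition \eqref{cond:l} and the exponent essentially independent of $s$), but a different per-scale budget: you do not need $\E^\times(\mathcal{P}_z,S)\ll\eps|\mathcal{P}_z|^2|S|$ at every scale, only $\sqrt{\E^\times(\mathcal{P}_z,S)}\ll\sqrt{\eps|S|}\,|\mathcal{P}^{(l)}|/\log l$; allowing the exponential factor to be as large as $\big(|\mathcal{P}^{(l)}|/(|\mathcal{P}_z|\log l)\big)^2$ pushes the cutoff down from $z_0$ to roughly $\eps^{-1}\log|S|/\log^2 l$, after which the trivial bound below the cutoff is affordable under \eqref{cond:S_A_new} in the main range of parameters. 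Second, to be fair, the paper's own proof elides exactly this point: it applies the Proposition at every dyadic scale with one uniform $\eps_*$ and claims the sum is bounded by the top-scale expression, without addressing the scales where the exponential blows up or where \eqref{cond:l} fails; your bookkeeping makes the difficulty visible, but as submitted the proposal establishes \eqref{f:S_A_new_K} only under a hypothesis stronger than \eqref{cond:S_A_new} by a power of $\eps$.
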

\begin{proof} 
	Take any $z\le [l/2]$. 
	By estimate \eqref{f:Radziwill-_new} of Proposition \ref{p:E(P_z,w)}, we get for any $\eps_* >0$ 
\[
	\E^\times (\mathcal{P}_z,S) \ll (\eps_* z)^{2}  |S| \exp\left( C \eps^{-1}_* z^{-1/2} \sqrt{ \log^{-1} z \cdot \log |S| } \right) \,.
\]	
	Summing over $z$ of the form $2^j\le l/2$, 
	 we obtain 
\[
	\E^{\times} (d \cdot \mathcal{P}^{(l)},S) \ll \eps^2_* l^2 |S|  \exp\left( C \eps^{-1}_* l^{-1/2}   \sqrt{ \log^{-1} l \cdot \log |S| } \right) \,.
\]
	Now put $\eps^2_*  = \frac{\eps}{\log^2 l}$. 
	Then thanks to our assumption \eqref{cond:S_A_new}, we have \eqref{f:S_A_new_K}. 
%
%
	This completes the proof. 
	$\hfill\Box$
\end{proof}

\bigskip

Of course in Theorem \ref{t:AP_in_G} one can consider more general arithmetic  progressions as well but in this case one should control the beginning and the step of such progression, simultaneously.      

\section{On general  $k$--convex functions} 
\label{sec:k-convex}

In \cite[Theorem 1.3]{HR-NR} authors obtained the following growth result for sequences of the form $A = f([N])$, where $f$ is  an arbitrary $k$--convex function.

\begin{theorem}
	Let $k\ge 2$ be an integer and let $A$ be a $k$--convex sequence. 
	Then
	\[
	|2^k A - (2^k-1) A| \gg \frac{|A|^{k+1}}{2^{k^2}} \,.
	\]
	\label{t:k-convex_intr}
\end{theorem}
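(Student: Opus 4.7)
The plan is to induct on $k$, with the inductive bridge being that if $A$ is $k$--convex then the first-difference sequence $B := \{a_{i+1}-a_i : 1 \le i < |A|\}$ is $(k-1)$--convex. The base case $k=1$ is the classical claim that every strictly convex sequence $A$ satisfies $|2A-A| \gg |A|^2$; this is a standard consequence of using $|2A-A| \ge |A|^3/\max_x r_{2A-A}(x)$ together with the convexity-based control of the representation function, in the spirit of Elekes--Iosevich--Nathanson.

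For the inductive step, suppose the result holds for $k-1$ and let $A = \{a_1 < \dots < a_n\}$ be $k$--convex with $n = |A|$. Since $B$ is $(k-1)$--convex with $|B| = n-1$, the inductive hypothesis gives
\[
|2^{k-1}B - (2^{k-1}-1)B| \gg \frac{n^k}{2^{(k-1)^2}}.
\]
Each $b \in B$ lies in $A-A$, so expanding each difference yields the inclusion
\[
A + \bigl(2^{k-1}B - (2^{k-1}-1)B\bigr) \subseteq 2^k A - (2^k-1)A.
\]
It therefore suffices to show that the set on the left-hand side has cardinality at least $n/2^{2k-1}$ times $|2^{k-1}B - (2^{k-1}-1)B|$, since combined with the inductive bound this delivers exactly $|A|^{k+1}/2^{k^2}$ and closes the induction.

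The main obstacle is this final product-size step, namely showing that translating $X := 2^{k-1}B - (2^{k-1}-1)B$ by distinct $a \in A$ yields essentially disjoint sets. I would attack it by fixing a canonical signed-sum representation of each $x \in X$ indexed by small sets $S_+, S_- \subseteq [n-1]$, partitioning $X$ according to (say) the minimum index of $S_+ \cup S_-$, and using the strict monotonicity of $B$ (which is guaranteed since $B$ is $(k-1)$--convex with $k \ge 2$) to recover $a$ uniquely from the extremal index of a representation of $a+x$. The pigeonhole bookkeeping then has to be tuned so that the multiplicative loss is no worse than $2^{2k-1}$, exactly what is needed to promote the denominator $2^{(k-1)^2}$ to $2^{k^2}$. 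An alternative route is to argue by contradiction via Pl\"unnecke--Ruzsa: a hypothetical small value of $|2^k A - (2^k-1)A|$ would force $|A+A-A|$ to be small at a scale that conflicts with convexity through the base case, but in either approach the combinatorial heart of the proof is how convexity propagates through iterated differences.
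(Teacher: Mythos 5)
First, note that the paper does not prove this statement at all: Theorem \ref{t:k-convex_intr} is quoted from \cite{HR-NR} (Theorem 1.3 there), so the relevant comparison is with the proof in that reference, which indeed proceeds, as you propose, by induction on $k$ via the consecutive-difference sequence. Your skeleton is sound up to a point: the inclusion $A+\bigl(2^{k-1}B-(2^{k-1}-1)B\bigr)\subseteq 2^{k}A-(2^{k}-1)A$ is correct, and the bookkeeping $2^{(k-1)^2}\cdot 2^{2k-1}=2^{k^2}$ would close the induction if the translate-growth step held.

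However, that deferred step --- $|A+X|\gg |A||X|2^{-(2k-1)}$ for $X=2^{k-1}B-(2^{k-1}-1)B$ --- is the entire content of the theorem, and neither your inductive hypothesis nor your sketched mechanism can deliver it. Your induction carries only a cardinality lower bound for $|X|$, and a cardinality bound alone cannot force $|A+X|\approx|A||X|$: one needs structural information about \emph{where} the elements of $X$ sit. The ``recover $a$ from the extremal index of a representation'' idea fails concretely: an element of $X$ whose canonical signed representation has minimal index $i$ can still be enormous (it involves $2^{k}-1$ differences of arbitrarily large index), so nothing prevents collisions $a_j+x=a_{j'}+x'$ with $j\neq j'$. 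The actual proof strengthens the induction hypothesis to a \emph{localized} statement: for the $(k-1)$-convex difference sequence $B=(d_1<d_2<\dots)$, the set $2^{k-1}B-(2^{k-1}-1)B$ contains many elements inside each gap $(0,d_j]$ built only from differences of index below $j$; for $k=1$ this is the classical squeeze $a_j<a_j+d_i\le a_{j+1}$ for $i\le j$, which already gives $\binom{n}{2}$ distinct elements of $2A-A$. With localization, for each $j$ one places $\gg_k j^{k}$ elements of $2^{k}A-(2^{k}-1)A$ inside the interval $(a_j,a_{j+1}]$, disjointness across $j$ is automatic, and summing over $j$ yields $|A|^{k+1}$ with the stated constant. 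A cardinality-only induction cannot see this, so as written your inductive step does not close. Two smaller points: your base case via $|2A-A|\ge |A|^3/\max_x r_{2A-A}(x)$ needs $\max_x r_{2A-A}(x)\ll|A|$, which is not justified for general convex sequences (the squeeze argument is the standard route); and the alternative Pl\"unnecke--Ruzsa contradiction you mention can only reach exponents near $2$, never $k+1$, since it funnels everything through the base case.
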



Thus Theorem \ref{t:k-convex_stat_intr} from the Introduction can be considered as  a "statistical"\, version of Theorem \ref{t:k-convex_intr}. 
Also, notice that the dependence on $k$ in Theorem \ref{t:k-convex_intr} is better.

In this Section we show how Theorem \ref{t:k-convex_stat_intr} implies an upper bound for the higher energy of any $k$--convex function. 
Basically, we repeat the combination of the arguments from \cite[Theorem 13]{RS} and \cite[Theorem 23]{s_energy}. 

\begin{theorem}
	Let $f$ be a function which is $k$--convex on a set $I$ for some $k \geq 1$. 
	Suppose that $|I+I-I|\le |I|^{1+\epsilon}$.  
	Then for all $l\le 2^k$, $\epsilon \le \frac{\log l}{l}$ one has  
	\begin{equation}\label{f:T_l}
	\T^{+}_{2^l} (f(I)) \ll |I|^{2^{l+1} - c \log l}  \,.
	\end{equation}
	for a certain absolute constant $c>0$.
	\label{t:T_l}	
\end{theorem}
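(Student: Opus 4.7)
Put $A = f(I)$; since $k$-convexity forces strict monotonicity of $f$ on $I$, the map is injective and $|A| = |I| =: N$. Write $n = 2^l$. Following the strategy of \cite[Theorem~13]{RS} and \cite[Theorem~23]{s_energy}, the plan is to pass from the energy $\T^{+}_{n} (A)$ to a subset $A' \subseteq A$ with small additive doubling via dyadic pigeonholing, and then to contradict that smallness using Theorem~\ref{t:k-convex_stat_intr} applied to the pullback $I' := f^{-1}(A') \subseteq I$.

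First, I would decompose $\T^{+}_{n} (A) = \sum_x r_{nA} (x)^2$ along dyadic level sets $P_\Delta := \{ x : \Delta \le r_{nA} (x) < 2\Delta \}$ and pigeonhole in $\Delta$ to reduce to $|P_\Delta| \Delta^2 \gg \T^{+}_{n} (A) / \log N$. A Balog--Szemer\'edi--Gowers (or Katz--Koester) type extraction applied to the $n$-tuples summing into $P_\Delta$ then yields a subset $A' \subseteq A$ with $|A'| \gg N (\log N)^{-O(1)}$ and $|A' + A'| \le K' |A'|$, where $K'$ is controlled polynomially by the ratio $|A|^{n-1}/\Delta$ up to logarithmic factors.

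Next, let $I' = f^{-1}(A')$, so that $|I'| = |A'|$ and $I' \subseteq I$. The hypothesis $|I+I-I| \le |I|^{1+\epsilon}$ combined with the Pl\"unnecke--Ruzsa inequality \eqref{f:Plunnecke} gives $|I'+I'-I'| \le |I'|^{1+\epsilon+o(1)}$, and Theorem~\ref{t:k-convex_stat_intr} applied to $I'$ with a parameter $k' \le k$ then yields
\[
|2^{k'} A' - (2^{k'}-1) A'| \gg \frac{|A'|^{k'+1}}{(C N^{\epsilon})^{2^{k'+1}-k'-2} (\log N)^{O(2^{k'})}} \,.
\]
On the other hand, Pl\"unnecke--Ruzsa applied to $A'$ with doubling $K'$ gives $|2^{k'} A' - (2^{k'}-1) A'| \le (K')^{2^{k'+1}-1} |A'|$. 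Comparing the two bounds forces a lower bound on $K'$, which through the definition of $K'$ in terms of $\Delta$ translates into an upper bound on $\Delta$ and hence on $|P_\Delta| \Delta^2$, and so on $\T^{+}_{n} (A)$. The final parameter choice is $k' = \lfloor \log l \rfloor$: the hypothesis $l \le 2^k$ guarantees $k' \le k$, and $\epsilon \le \log l / l$ guarantees $\epsilon \cdot 2^{k'+1} \ll \log l$, so that the loss $N^{\epsilon (2^{k'+1}-k'-2)}$ is at most $N^{O(\log l)}$ while the gain $|A'|^{k'+1}$ is roughly $N^{\log l}$; tracing the net saving through the chain produces $\T^{+}_{n} (A) \ll N^{2^{l+1} - c\log l}$.

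The main obstacle is the Balog--Szemer\'edi--Gowers extraction of $A'$: one needs $|A'|$ within a polylogarithmic factor of $N$ and $K'$ polynomially coupled to $\Delta$, so that the subsequent comparison of Pl\"unnecke--Ruzsa with Theorem~\ref{t:k-convex_stat_intr} can convert the convex growth $|A'|^{k'+1}$ into the final logarithmic saving in the exponent. The coupling $\epsilon \le \log l / l$ together with $l \le 2^k$ is calibrated precisely so that $k' = \lfloor \log l \rfloor$ is both admissible in Theorem~\ref{t:k-convex_stat_intr} and large enough to generate the required $\log l$ gain.
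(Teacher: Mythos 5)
Your plan has the right ingredients (dyadic level sets, Balog--Szemer\'edi--Gowers, a comparison of Theorem \ref{t:k-convex_stat_intr} against Pl\"unnecke--Ruzsa, a convexity parameter of size about $\log l$), but its central step is a genuine gap. You claim that from a popular dyadic level set $P_\Delta$ of $r_{nA}$ one can extract a subset $A'\subseteq A$ with $|A'|\gg N(\log N)^{-O(1)}$ and $|A'+A'|\le K'|A'|$, $K'$ polynomially coupled to $|A|^{n-1}/\Delta$. No form of BSG gives this: the crude pigeonholing $|P_\Delta|\Delta^2\gg \T^{+}_n(A)/\log$ says nothing about the \emph{additive energy} of $P_\Delta$, which is what BSG needs, and the conclusion is false in general (take $A$ to be the union of an arithmetic progression of size $N^{0.9}$ and a random set: $\T^{+}_n$ is much larger than the random level, yet no subset of size $N(\log N)^{-O(1)}$ has small doubling). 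Indeed, for $A=f(I)$ the existence of such an $A'$ is exactly what Theorem \ref{t:k-convex_stat_intr} forbids. The paper's proof avoids this by an induction on $j\le l$: assuming $\T_{2^j}>\T_{2^{j-1}}|A|^{2^j}/Q$ fails, the level set $P$ of $r_{2^{j-1}A}$ is shown to have \emph{near-maximal} energy $\E^{+}(P)\gg |P|^3/Q_1$, BSG is applied to $P$ (not to $A$) to get $P_*$ with small doubling, and then one finds a shift $x$ and $A_*\subseteq A\cap(P_*-x)$ of size $\ge \Delta|P_*|/|A|^{2^{j-1}-1}$. Crucially $A_*$ is never claimed to have small doubling; its iterated sumsets are bounded by those of $P_*$ via the containment $A_*\subseteq P_*-x$ and \eqref{f:Plunnecke}, and Theorem \ref{t:k-convex_stat_intr} applied to $A_*=f(S)$ with $t=\log j$ gives the contradiction. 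Your proposed application of Pl\"unnecke to deduce $|I'+I'-I'|\le |I'|^{1+\epsilon+o(1)}$ for a subset $I'\subseteq I$ is also incorrect as stated: passing to a subset costs the factor $|I|/|I'|$ in the relative doubling, which must be tracked explicitly (the paper writes $K=|A|^{1+\epsilon}/|A_*|$ and uses $\epsilon\le \log l/l$ to show $K\ll (LQ)^{C_*}$).

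There is a second, quantitative, problem: even granting your extraction, a single-scale argument cannot produce the claimed saving. Your comparison forces roughly $K'\gg N^{c\log l/l}$, hence $\Delta\ll N^{n-1-c\log l/l}$ and $\T^{+}_n(A)\ll |A|^n\Delta\ll N^{2^{l+1}-1-c\log l/l}$, i.e. a saving of order $1$ in the exponent, not $c\log l$. The paper obtains \eqref{f:T_l} precisely because the inductive step saves a factor $Q=|A|^{c\log j/j}$ at every scale $j$, and $\sum_{j\le l}\log j/j\gg \log l$, so the savings accumulate along the induction. To repair your argument you would need both the inductive structure (to make the level set's energy near-maximal, so BSG applies with the right parameters) and the bookkeeping of $\Delta|P_*|$, $|A_*|$ and $K$ exactly as in the paper's proof.
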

\begin{proof}
	Put $A=f(I)$.
	Let $\T_{2^j} := \T^{+}_{2^j}(A)$ and $\T_1 = |A|^2$.
	Our task is to prove for any $j\in [l]$ that 
	\begin{eqnarray}\label{f:ind}	
	\T_{2^j} \le \frac{\T_{2^{j-1}} |A|^{2^j}}{Q} \,, 
	\end{eqnarray}
	where $Q=|A|^{\frac{c \log j}{j}}$ because it clearly implies \eqref{f:T_l}. 
	Suppose not. 
	Put $L=O(k \log |A|)$. 
	By the dyadic Dirichlet principle and the H\"older inequality in the form \eqref{f:E_Ho} there is a number $\D >0$ and a set $P = \{ x \in \Z ~:~ \D <  r_{2^{j-1}A} (x) \le 2 \D \}$ such that
	\begin{equation}\label{tmp:19.09_3}
	L^4 \D^4 \E^{+}(P) \ge \T_{2^j} \ge \frac{|A|^{2^j} \T_{2^{j-1}}}{Q} \ge \frac{(\D |P|)^2 \D^2 |P|}{Q} \,.
	\end{equation}
	Indeed, we can assume that \eqref{f:T_l} does not hold (otherwise there is nothing to prove) and 
	thus  by our condition $j\le l\le 2^k$ one has  
	\[
	|A|^{2^{j-1}-1} \ge \D \gg |A|^{2^{j-1}-c \log j} 
	\]
	and hence we do indeed have the upper bound \eqref{tmp:19.09_3} with the quantity $L$.
	Further from \eqref{tmp:19.09_3}, we obtain
	$\D \ge L^{-4} \T_{2^j} |A|^{-3\cdot 2^{j-1}}$ and
	\[
	\E^{+}(P) \gg L^{-4} \frac{|P|^3}{Q} := \frac{|P|^3}{Q_1} \,.
	\]
	Also notice that $\D^4 \E^{+}(P) \le \D^2 |P| (\D |P|)^2 \le \T_{2^{j-1}} (\D |P|)^2$ and hence from \eqref{tmp:19.09_3}, we get 
	\begin{equation}\label{tmp:19.09_4}
	\D |P| \ge \frac{|A|^{2^{j-1}}}{L^2 Q^{1/2}} \,.
	\end{equation}
	Similarly, $\D^4 \E^{+}(P) \le (\D^2 |P|) |A|^{2^j-2} |P|^2 \le \T_{2^{j-1}} |A|^{2^j-2} |P|^2$ and thus from \eqref{tmp:19.09_3}, we derive
	\begin{equation}\label{tmp:19.09_4'}
	|P| \ge \frac{|A|}{L^2 Q^{1/2}} \,.
	\end{equation}
	By the   Balog--Szemer\'edi--Gowers Theorem (see, e.g., \cite{TV}), 
	we find $P_* \subseteq P$ such that $|P_*| \gg |P| Q_1^{-C_*}$, and  $|P_*+P_*| \ll Q^{C_*}_1 |P_*|$.
	Here $C_* > 1$ is an absolute constant, which may change from line to line.
	By the definition of the set $P$, we have 
	\[
	\D |P_*| \le \sum_{x\in P_*} r_{2^{j-1}A} (x) = \sum_{x_1,\dots, x_{2^{j-1}-1}\in A} r_{P_*-A} (x_1 +\dots +x_{2^{j-1}-1}) \,.
	\]
	Hence there is a shift $x$ and a set $A_* \subseteq A \cap (P_*-x)$ such that 
	\begin{equation}\label{tmp:16.09_1}
	|A_*| \ge \D |P_*| / |A|^{2^{j-1}-1} \gg |A| (LQ)^{-C_*} \,.
	\end{equation}
	Here we have used bound \eqref{tmp:19.09_4}.
	The set $A_*$ has the form $A_* = f(S)$, where $S\subseteq I$ is a set of the same size. 
	Clearly, 
	$$
	|S+S-S|\le |I+I-I| \le |I|^{1+\epsilon} = |A|^{1+\epsilon}/|A_*| \cdot |S| := K|S| \,.
	$$
	Applying Theorem \ref{t:k-convex_stat_intr} with a parameter $t = t(j) \le k$, which we will choose later, combining with  inequality \eqref{f:Plunnecke}, we obtain 
	\[
	\frac{|A_*|^{t+1}}{(C K)^{2^{t+1}-t-2}(\log |A_*|)^{2^{t+2}-t-4}} \le |2^{t} A_* - (2^{t}-1)A_*| \le |2^{t} P_* - (2^{t}-1) P_*| 
	\ll
	\]
	\begin{equation}\label{tmp:03.10_1}
	\ll
	Q^{(2^{t+1}-1)C_*}_1 |P_*| \,. 
	\end{equation}
	Thanks to estimate \eqref{tmp:16.09_1}, we know that $K \ll (LQ)^{C_*} |A|^\epsilon$.
	By the assumption $\epsilon \le \frac{\log l}{l}$ and hence  $K \ll (LQ)^{C_*}$ (with another constant $C_*$ of course) by our choice of $Q$.  
	Using this 
	estimate, 
	as well as both inequalities  from \eqref{tmp:16.09_1}, combining with \eqref{tmp:19.09_4} and the lower bound $|P_*| \gg |P| Q_1^{-C_*}$, we derive from \eqref{tmp:03.10_1} 
	\[
	\D |P_*| \cdot |A|^{t+1-2^{j-1}} Q^{-C_* 2^t}_1  \le \left( \frac{\D |P_*|}{A|^{2^{j-1}-1}}  \right)^{t+1} \ll Q^{C_* 2^t}_1 |P_*| \,.
	\]
	Hence 
	$$
	\D |A|^{t+1-2^{j-1}} \ll Q^{C_* 2^t}_1 
	$$
	and in view of \eqref{tmp:19.09_3}, we get
	\[
	|A|^{2^{j-1}- (t+1)} Q^{C_* 2^t}_1 |A|^{3\cdot 2^{j-1}} \ge \T_{2^j} \ge |A|^{2^{j+1} - c \log j} \,.
	\]
	Now take the parameter $t$ as $t(j) = \log j$. 
	It follows that for sufficiently large constant $C'$ we get  $Q\gg |A|^{\frac{\log j}{C' j}}$. 
	This completes the proof. 
	$\hfill\Box$
\end{proof}

\bigskip 

Theorem \ref{t:T_l}	 can be used to obtain a series of lower bounds for various combinations of {\it different} sets see, e.g., \cite[Corollary 1.5]{HR-NR}.
We restrict ourself  by just one consequence.  
Much more stronger results for subsets of $\Z$ were obtained in \cite{HR-NRI}, \cite{HR-NRII}. 

\begin{corollary}
	Let $m$ be a positive integer, $A_1,\dots, A_{2^m} \subset \mathbb{R}$ be sets of the same size $|A_1|$, $|A_j A_j| \ll |A_j|$, $j\in [2^m]$.  
	Then for any non--zero shifts $z_1,\dots,z_{2^m}$ one has 
	\[
	|(A_1+z_1) \dots (A_{2^m}+z_{2^m})| \gg |A_1|^{c \log m} \,. 
	\]
\end{corollary}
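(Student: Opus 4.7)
The plan is to pass to logarithms, turning the product set into a $2^m$-fold sumset. After a pigeonhole restricting each $A_j$ to the sub-sample on which $A_j$ and $A_j+z_j$ are of a common sign (an operation that preserves the multiplicative doubling hypothesis up to an absolute constant), we may assume $A_j,A_j+z_j\subset(0,\infty)$, so that
\[
|(A_1+z_1)\cdots(A_{2^m}+z_{2^m})|=|\log(A_1+z_1)+\cdots+\log(A_{2^m}+z_{2^m})|.
\]
Setting $J_j:=\log A_j$, the hypothesis $|A_jA_j|\ll|A_j|$ becomes $|J_j+J_j|\ll|J_j|$, and Pl\"unnecke--Ruzsa gives $|J_j+J_j-J_j|\ll|J_j|$. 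The key observation is that $\log(A_j+z_j)=c_j+h_{\pm}(L_j)$, where $c_j$ is a constant, $L_j:=J_j-\log|z_j|$ has the same doubling as $J_j$, and $h_{\pm}(y):=\log(1\pm e^y)$ is one of two \emph{universal} functions independent of $j$. Summing over $j$ absorbs all constants into a single global shift, and the task reduces to lower-bounding $|h_{\pm}(L_1)+\cdots+h_{\pm}(L_{2^m})|$.

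Next I would verify that each $h_{\pm}$ is $k$-convex on a suitably large subset of $L_j$ for $k:=\lceil\log_2 m\rceil$. Each derivative $h_\pm^{(n)}$ is a polynomial of degree $n$ in the sigmoid $(1+e^{\mp y})^{-1}$, so it has at most $O(n)$ real extrema; consequently the first $k$ derivatives of $h_\pm$ together split $\mathbb{R}$ into $O(k^2)=O((\log m)^2)$ intervals on each of which $h_\pm',\ldots,h_\pm^{(k)}$ are simultaneously strictly monotone. A pigeonhole picks one such interval containing $L_j^*\subseteq L_j$ with $|L_j^*|\gg|A_1|/(\log m)^{O(1)}$. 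Since passing to a subset can only decrease $|L_j^*+L_j^*-L_j^*|$, we retain doubling $|L_j^*+L_j^*-L_j^*|\le|L_j^*|^{1+\eps}$ with $\eps\ll\log\log m/\log|A_1|$, well below $(\log m)/m$ for $m\ll\log|A_1|$. Theorem~\ref{t:T_l} applied with $l=m$ to each $X_j:=h_\pm(L_j^*)$ then yields
\[
\T^{+}_{2^m}(X_j)\ll|A_1|^{2^{m+1}-c\log m}.
\]

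To combine these individual bounds across $j$, I would use the H\"older inequality $E_{2^m}(X_1,\ldots,X_{2^m})\le\prod_{j=1}^{2^m}\T^{+}_{2^m}(X_j)^{1/2^m}$ for the common additive energy $E_{2^m}(X_1,\ldots,X_{2^m}):=\sum_t r_{X_1+\cdots+X_{2^m}}(t)^2$, which follows from Parseval together with pointwise H\"older on the representation $E_{2^m}=\int\prod_j|\widehat{1_{X_j}}|^2$. This gives $E_{2^m}(X_1,\ldots,X_{2^m})\ll|A_1|^{2^{m+1}-c\log m}$, and the usual Cauchy--Schwarz delivers
\[
|X_1+\cdots+X_{2^m}|\ge\frac{(|X_1|\cdots|X_{2^m}|)^2}{E_{2^m}(X_1,\ldots,X_{2^m})}\gg|A_1|^{c\log m},
\]
absorbing the polylogarithmic pigeonhole losses into the constant $c$. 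Since $X_1+\cdots+X_{2^m}$ embeds into $\log((A_1+z_1)\cdots(A_{2^m}+z_{2^m}))$ up to the global shift $\sum_j c_j$, the corollary follows.

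The principal obstacle is the $k$-convexity pigeonhole: the factor $(\log m)^{O(1)}$ lost in each $|L_j^*|$ is raised to the $2\cdot 2^m$-th power in the final Cauchy--Schwarz, so it can be absorbed by the gain $|A_1|^{c\log m}$ only in the regime $m\ll\log\log|A_1|$. Extending the statement to larger $m$ would require a finer analysis that either sharpens the pigeonhole by exploiting the specific rational structure of $h_\pm^{(n)}$, or uses the small-doubling structure of $L_j$ to guarantee that the bulk of $L_j$ sits far from the transition points of $h_\pm$.
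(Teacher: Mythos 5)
Your skeleton is the same as the paper's: take logarithms so that the shifted product set becomes a $2^m$-fold sumset of images of a function of convexity type, bound $\T^{+}_{2^m}$ of each factor by Theorem \ref{t:T_l} with $l=m$ (so only $\lceil\log_2 m\rceil$-convexity is ever needed), and finish with the H\"older bound $\sum_x r^2 \le \bigl(\prod_j \T_{2^m}\bigr)^{1/2^m}$ plus Cauchy--Schwarz. This is exactly the paper's chain, written there multiplicatively as $|A_1|^{2^{m+1}} \le |(A_1+z_1)\cdots(A_{2^m}+z_{2^m})|\cdot\bigl(\prod_j\T^\times_{2^m}(A_j+z_j)\bigr)^{1/2^m}$, and your $c_j+h_{\pm}(L_j)$ is just the paper's $f_z(x)=\log(z+e^x)$ in normalized form.

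The genuine gap is the range of $m$ you end up with. The paper applies Theorem \ref{t:T_l} to $f=f_{z_j}$ on the \emph{full} set $I=\log A_j$ (asserting that $f_z$ is $k$-convex for every $k$), so no element of any $A_j$ is discarded, there is no per-set loss, and the only hidden constraint is the hypothesis $\epsilon\le\frac{\log l}{l}$ of Theorem \ref{t:T_l}, i.e.\ roughly $m/\log m\ll\log|A_1|$. You instead pigeonhole each $A_j$ twice (once on signs, once into one of $O((\log m)^2)$ monotonicity intervals of $h_\pm$), and any factor larger than $1$ lost per set reappears raised to the power $2^{m+1}$ through $\bigl(\prod_j|X_j|\bigr)^2$ in the final Cauchy--Schwarz; as you acknowledge, the gain $|A_1|^{c\log m}$ then absorbs the loss only when $m\ll\log\log|A_1|$. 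So as written your argument proves the corollary only in an exponentially narrower range than stated, and the fix cannot be a finer count of intervals: \emph{any} set-dependent shrinking, or even splitting $\T^{+}_{2^m}$ over $t$ pieces (which costs $t^{2^{m+1}}$ by the triangle inequality for $\T_{2^m}^{1/2^{m+1}}$), runs into the same $2^{m+1}$-th power. To be fair, your scruple is not empty --- for $z>0$ the second and higher derivatives of $\log(z+e^x)$ are not globally monotone, so the paper's one-line convexity claim does implicitly require a restriction --- but to recover the full statement one must argue without per-set cardinality losses (e.g.\ show the bulk of $\log A_j$ lies in a single monotonicity region, or run the proof of Theorem \ref{t:T_l} for piecewise $k$-convex functions), rather than pigeonhole each $A_j$ as you do.
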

\begin{proof} 
	For any $z\neq 0$ consider the function $f_z (x) = \log (z+e^x)$. 
	Then $f_z$ is $k$--convex for any $k$. 
	Also, for $I=\log A$, where $A$ is any of the sets $A_j$, $j\in [2^m]$ one has in view of \eqref{f:Plunnecke} that $|I+I-I|\ll |I|$. 
	Applying Theorem \ref{t:T_l} for  $f=f_z$, and $l=m$, we see that $\T^\times_{2^m} (A+z) \ll |A|^{2^{m+1}- c \log m}$.
	Hence by the H\"older inequality
	\[
	|A_1|^{2^{m+1}} \le |(A_1+z_1) \dots (A_{2^m}+z_{2^m})| \cdot \sum_x r^2_{(A_1+z_1) \dots (A_{2^m}+z_{2^m})} (x) 
	\le 
	\]
	\[
	\le 
		|(A_1+z_1) \dots (A_{2^m}+z_{2^m})| \cdot \left(\prod_{j=1}^{2^m} \T^\times_{2^m} (A_j+z_j) \right)^{1/2^m}
	\ll
	|(A_1+z_1) \dots (A_{2^m}+z_{2^m})| \cdot |A_1|^{2^{m+1}- c \log m}
	\]
	as required. 
	$\hfill\Box$
\end{proof}

\bigskip 

Now we obtain a new incidence result for one--parametric curves. 

\begin{theorem}
	Let $f$ be a function which is $k$--convex on a set $I$ for some $k \geq 1$.
	Suppose that $|I+I-I|\le |I|^{1+\epsilon}$ and $\epsilon \le \frac{\log k}{k}$.  
	Then for any finite sets $B,C\subset \mathbb{R}$ with  $|I| \ge |B|^\eps$, $\eps \gg 1/k$ and $\epsilon \le \exp(-1/(c\eps))$  
	there is $\delta (\eps) \ge \exp(-\exp (O(1/\eps)))>0$ such that 
	\begin{equation}\label{f:inc_f}
	|\{ (i,b,c)\in I\times B \times C ~:~ f(i) + b = c \}| \ll \sqrt{|B||C|} |I| \cdot |B|^{-\d(\eps)} \,.
	\end{equation}
	\label{t:inc_f}
\end{theorem}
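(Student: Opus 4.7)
The plan is to reduce Theorem~\ref{t:inc_f} to the common additive energy estimate of Theorem~\ref{t:E_f_intr} (stated in the Introduction) by a single Cauchy--Schwarz inequality.

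First, denote the incidence count on the left-hand side of \eqref{f:inc_f} by $N$ and rewrite it as $N=\sum_{c\in C} r_{f(I)+B}(c)$. Applying Cauchy--Schwarz over $c\in C$,
\[
N^2 \le |C|\sum_{c\in C} r_{f(I)+B}(c)^2 \le |C|\sum_{c} r_{f(I)+B}(c)^2 = |C|\cdot \E^{+}(f(I),B) \,.
\]

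Next, I would invoke Theorem~\ref{t:E_f_intr} with $S=B$. The hypotheses appearing in the statement of Theorem~\ref{t:inc_f}---namely the $k$-convexity of $f$ on $I$, the assumption $|I+I-I|\le|I|^{1+\epsilon}$, the lower bound $|I|\ge|B|^\eps$, together with $\eps\gg 1/k$ and $\epsilon\le\exp(-1/(c\eps))$---match those of Theorem~\ref{t:E_f_intr} verbatim (the extra hypothesis $\epsilon\le\log k/k$ in Theorem~\ref{t:inc_f} is exactly what Theorem~\ref{t:T_l} requires internally for Theorem~\ref{t:E_f_intr}). Hence Theorem~\ref{t:E_f_intr} delivers
\[
\E^{+}(f(I),B) \ll |I|^2\,|B|^{1-\delta_0(\eps)}
\]
for some $\delta_0(\eps)>0$ with $\delta_0(\eps)\ge\exp(-\exp(O(1/\eps)))$.

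Combining the two displays, $N^2 \ll |I|^2\,|B|\,|C|\cdot|B|^{-\delta_0(\eps)}$, so $N \ll |I|\sqrt{|B||C|}\cdot|B|^{-\delta_0(\eps)/2}$. Setting $\delta(\eps):=\delta_0(\eps)/2$ gives the required bound \eqref{f:inc_f}, with the stated doubly-exponential lower bound for $\delta(\eps)$.

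The Cauchy--Schwarz step is entirely routine. The substance of the argument sits in Theorem~\ref{t:E_f_intr}, and the main obstacle---really the only one---is extracting the explicit lower bound $\delta(\eps)\ge\exp(-\exp(O(1/\eps)))$. That quantitative shape is inherited from the proof of Theorem~\ref{t:E_f_intr}, which combines the higher energy bound $\T^{+}_{2^l}(f(I))\ll|I|^{2^{l+1}-c\log l}$ of Theorem~\ref{t:T_l} with a H\"older-type inequality such as $\E^{+}(A,B)\le\T^{+}_k(A)^{1/k}|B|^{2-1/k}$, and then optimizes the parameter $l$ balanced against the twin constraints $\eps \gg 1/k$ and $\epsilon\le\exp(-1/(c\eps))$; the doubly-exponential dependence in $\delta(\eps)$ arises naturally when one solves for this optimal $l$.
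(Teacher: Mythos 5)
Your Cauchy--Schwarz reduction of the incidence count to $\E^{+}(f(I),B)$ is correct as an inequality, but the argument as written is circular relative to the paper: the result you invoke, Theorem \ref{t:E_f_intr}, is restated as Corollary \ref{c:E_f}, and its only proof in the paper is deduced \emph{from} Theorem \ref{t:inc_f} (via the level sets $S_\tau$ and the incidence bound applied with $C=S_\tau$). Up to one application of Cauchy--Schwarz the two statements are essentially equivalent, and the paper's logical order is incidence first, energy second; so the energy bound cannot be used as a black box here. The genuine content of the theorem --- which your proposal defers to a closing sketch --- is exactly the missing step: obtaining $\E^{+}(f(I),B)\ll |I|^2|B|^{1-\delta(\eps)}$, or the incidence bound itself, directly from the higher--energy estimate of Theorem \ref{t:T_l}.

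That missing step can be carried out, and once written down it is essentially the paper's own computation in different packaging. The paper proves Theorem \ref{t:inc_f} directly: setting $A=f(I)\cup(-f(I))$ and letting $\sigma$ be the incidence count, iterated Cauchy--Schwarz gives $\sigma^{2^{j+1}}\le |C|^{2^{j}}|B|^{2^{j}-2}\,\E^{+}(B)\,\T^{+}_{2^{j}}(A)$; then Theorem \ref{t:T_l}, the trivial bound $\E^{+}(B)\le|B|^3$ and $|I|\ge |B|^{\eps}$ yield $\sigma\ll\sqrt{|B||C|}\,|I|\cdot\bigl(|B|/|I|^{c\log j}\bigr)^{2^{-(j+1)}}$, and the choice $j\gg\exp(1/(c\eps))$ (this is where $\eps\gg 1/k$ is needed, so that $j\le 2^k$, and where $\epsilon\le\exp(-1/(c\eps))$ guarantees $\epsilon\le\log j/j$) produces $\delta(\eps)\sim\exp(-\exp(O(1/\eps)))$. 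If you prefer your packaging, you must actually prove the H\"older step you assert, e.g. $\E^{+}(A,B)\le (\T^{+}_{k}(A))^{1/k}|B|^{1+1/k}$ (Freiman--embed the finite sets into a cyclic group and apply H\"older to $\sum_{\xi}|\widehat{1_A}(\xi)|^{2}|\widehat{1_B}(\xi)|^{2}$), then take $k=2^{l}$, use $|I|\ge|B|^{\eps}$ and optimize $l\sim\exp(O(1/\eps))$, checking the same constraints $l\le 2^{k}$ and $\epsilon\le\log l/l$, and finish with your Cauchy--Schwarz over $C$. Without one of these two executions the proposal asserts, rather than proves, the theorem.
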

\begin{proof} 
	Put $A=f(I)\cup (-f(I))$ and let $\sigma$ be cardinality of the set on the left--hand side of \eqref{f:inc_f}. 
	Using the Cauchy--Schwarz inequality several times, we obtain for any $j$ 
	\[
	\sigma^{2^j} \le |C|^{2^{j-1}} |B|^{2^{j-1}-1} \sum_x r_{2^j A} (x) r_{B-B} (x) \,.
	\]
	Applying the Cauchy--Schwarz  inequality one more time, we get
	\[
	\sigma^{2^{j+1}} \le |C|^{2^{j}} |B|^{2^{j}-2} \E^{+}(B) \T_{2^{j}} (A) \,. 
	\]
	Now suppose that $j\le 2^k$.
	Then by Theorem \ref{t:T_l} and the trivial bound $\E^{+}(B)\le |B|^3$, we obtain 
	\[
	\sigma^{2^{j+1}} \ll |C|^{2^{j}} |B|^{2^{j}} \cdot |B|  |I|^{2^{j+1} - c\log j} \,.
	\]
	It gives us
	\[
	\sigma \ll \sqrt{|B||C|} |I| \cdot \left( \frac{|B|}{|I|^{c\log j}} \right)^{2^{-(j+1)}}
	\]
	By our assumption $|I| \ge |B|^\eps$ and hence taking $j \gg \exp(1/(c\eps))$, we derive 
	\[
	\sigma \ll \sqrt{|B||C|} |I| \cdot |B|^{-2^{-(j+1)}}
	\]
	as required. 
	Here $\delta(\eps) \sim \exp(-\exp (1/c\eps))$. 
	This completes the proof. 
	$\hfill\Box$
\end{proof}

\bigskip 

The incidence result above implies Theorem \ref{t:E_f_intr} from the Introduction.

\begin{corollary}
	Let $f$ be a function which is $k$--convex on a set $I$ for some $k \geq 1$.
	Suppose that $|I+I-I|\le |I|^{1+\epsilon}$.
	Then for any finite set $B\subset \mathbb{R}$ with  $|I| \ge |B|^\eps$, $\eps \gg 1/k$, $\epsilon \le \exp(-1/(c\eps))$  there is $\delta (\eps)>0$ such that 
	\begin{equation}\label{f:E_f}
	\E^{+} (f(I),B) \ll |I|^2 |B|^{1-\delta(\eps)} \,.
	\end{equation}
	In particular, $|f(I) + B| \gg |B|^{1+\delta(\eps)}$. 
	\label{c:E_f}
\end{corollary}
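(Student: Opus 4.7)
The plan is to deduce Corollary \ref{c:E_f} directly from the incidence bound in Theorem \ref{t:inc_f} via a dyadic decomposition of the representation function $r(x) := r_{f(I)+B}(x)$. Note that $f$ is injective on $I$ (being $k$-convex for $k \ge 1$), so $|f(I)| = |I|$, and
\[
\E^{+}(f(I), B) \;=\; \sum_x r(x)^2, \qquad \sum_x r(x) \;=\; |I|\,|B|\,.
\]

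The first step is to split the sum at a threshold $T$ (to be chosen). The light part is estimated trivially:
\[
\sum_{x \,:\, r(x) \le T} r(x)^2 \;\le\; T \sum_x r(x) \;=\; T\,|I|\,|B|\,.
\]
For the heavy part, introduce the dyadic level sets $C_j := \{x : 2^{j-1} < r(x) \le 2^j\}$ for $j$ ranging up to $\lceil \log |I| \rceil$. Apply Theorem \ref{t:inc_f} with $C = C_j$: since the number of incidences $\sum_{c \in C_j} r(c)$ is at least $2^{j-1}|C_j|$ and at most $O(\sqrt{|B||C_j|}\,|I|\,|B|^{-\delta(\eps)})$, rearranging yields
\[
|C_j| \;\ll\; 4^{-j}\,|I|^2\,|B|^{1-2\delta(\eps)},
\qquad \text{hence}\qquad \sum_{c \in C_j} r(c)^2 \;\le\; 2^{2j}|C_j| \;\ll\; |I|^2 |B|^{1-2\delta(\eps)}.
\]
Summing over the $O(\log |I|)$ relevant values of $j$ and combining with the light-part bound gives
\[
\E^{+}(f(I),B) \;\ll\; T\,|I|\,|B| \;+\; \log|I|\cdot |I|^2 |B|^{1-2\delta(\eps)}.
\]
Choosing $T \asymp |I|\,|B|^{-\delta(\eps)}$ balances the two terms and produces $\E^{+}(f(I),B) \ll \log|I|\cdot |I|^2 |B|^{1-\delta(\eps)}$. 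The residual $\log|I|$ factor is absorbed by shrinking $\delta(\eps)$ slightly, using the hypothesis $|I| \ge |B|^\eps$ so that $\log|I|$ is dominated by any fixed small power of $|B|^{\delta(\eps)}$; this yields \eqref{f:E_f} with a new $\delta(\eps)>0$. The lower bound $|f(I)+B| \gg |B|^{1+\delta(\eps)}$ then follows by the standard Cauchy--Schwarz inequality
\[
|f(I)+B| \cdot \E^{+}(f(I),B) \;\ge\; \Bigl(\sum_x r(x)\Bigr)^2 \;=\; |I|^2|B|^2.
\]

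The main technical point is checking that the hypotheses of Theorem \ref{t:inc_f} are genuinely available for the set $B$ playing both roles (as the additive partner and in the size assumption $|I| \ge |B|^\eps$), and that the exponent $\delta(\eps)$ produced by Theorem \ref{t:inc_f} survives the loss of a constant factor when we redefine it to absorb the $\log|I|$. Everything else is a routine dyadic pigeonhole argument; no further combinatorial input is required beyond the incidence bound.
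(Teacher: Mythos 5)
Your argument is essentially the paper's own proof: the paper also applies Theorem \ref{t:inc_f} to the level sets $S_\tau=\{s: r_{f(I)+B}(s)\ge\tau\}$, sums over $\tau$ dyadically, and finishes with Cauchy--Schwarz, so your dyadic decomposition with $C=C_j$ is the same argument in slightly different clothing. The one repair needed is your absorption of the $\log|I|$ factor: the hypothesis $|I|\ge|B|^{\eps}$ bounds $\log|I|$ from \emph{below}, not above, so it does not show $\log|I|\ll|B|^{\delta/2}$; instead note that $r(x)\le\min(|I|,|B|)\le|B|$, so only $O(\log|B|)$ dyadic levels are nonempty (and $\log|B|$ is absorbed into a small power of $|B|$), or observe that with your threshold $T\asymp|I||B|^{-\delta}$ the heavy range is empty whenever $T\ge|B|$, which covers the case of $|I|$ much larger than $|B|$.
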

\begin{proof} 
	Let $\tau>0$ be  a real number and 
	$$S_\tau = \{ s\in \mathbb{R} ~:~ |\{ (i,b) \in I \times B ~:~ f(i)+b = s\}| \ge \tau \} \,.$$ 
	Using Theorem \ref{t:T_l}, we have 
	\[
	\tau |S_\tau| \le |\{ (i,b,s) \in I \times B \times S_\tau ~:~ f(i)+b = s\}| \ll \sqrt{|B| |S_\tau|} |B|^{-\delta(\eps)} \,. 
	\]
	By summation we obtain \eqref{f:E_f} and  the bound $|f(I) + B| \gg |B|^{1+\delta(\eps)}$ follows from the Cauchy--Schwarz inequality. 
	This completes the proof. 
	$\hfill\Box$
\end{proof}

\bigskip

\noindent{Steklov Mathematical Institute,\\
	ul. Gubkina, 8, Moscow, Russia, 119991}
\\
and
\\
IITP RAS,  \\
Bolshoy Karetny per. 19, Moscow, Russia, 127994\\
and 
\\
MIPT, \\ 
Institutskii per. 9, Dolgoprudnii, Russia, 141701\\
{\tt ilya.shkredov@gmail.com}

\end{document}